\numberwithin{equation}{section}
\numberwithin{figure}{section}
\theoremstyle{plain}
\newtheorem{thm}{\protect\theoremname}
\theoremstyle{plain}
\newtheorem{lem}[thm]{\protect\lemmaname}
\theoremstyle{definition}
\newtheorem{defn}[thm]{\protect\definitionname}
\theoremstyle{plain}
\newtheorem{prop}[thm]{\protect\propositionname}
\theoremstyle{definition}
\newtheorem{example}[thm]{\protect\examplename}
\theoremstyle{plain}
\newtheorem{conjecture}[thm]{\protect\conjecturename}
\theoremstyle{plain}
\newtheorem*{conjecture*}{\protect\conjecturename}
\theoremstyle{remark}
\newtheorem{claim}[thm]{\protect\claimname}
\theoremstyle{remark}
\newtheorem{rem}[thm]{\protect\remarkname}
\theoremstyle{plain}
\newtheorem{cor}[thm]{\protect\corollaryname}
\DeclareTextSymbolDefault{\textquotedbl}{T1}
\theoremstyle{plain}
\DeclareMathOperator{\OF}{\Omega_F}
\DeclareMathOperator{\OS}{\Omega_S}
\DeclareMathOperator{\dOS}{\partial\Omega_S}
\DeclareMathOperator{\dO}{\partial\Omega}
\DeclareMathOperator{\Lp}{L}
\DeclareMathOperator{\H1}{H}
\DeclareMathOperator{\C}{C}
\DeclareMathOperator{\Div}{div}
\newcommand*{\dx}{\mathop{}\!\mathrm{d}}
\providecommand{\corollaryname}{Corollary}
\providecommand{\definitionname}{Definition}
\providecommand{\lemmaname}{Lemma}
\providecommand{\remarkname}{Remark}
\providecommand{\theoremname}{Theorem}
\providecommand{\claimname}{Claim}
\providecommand{\conjecturename}{Conjecture}
\providecommand{\corollaryname}{Corollary}
\providecommand{\definitionname}{Definition}
\providecommand{\examplename}{Example}
\providecommand{\lemmaname}{Lemma}
\providecommand{\propositionname}{Proposition}
\providecommand{\remarkname}{Remark}
\providecommand{\theoremname}{Theorem}
\begin{document}
\title[Stabilty of the fluid-elastic semigroup]{ strong stability and the Schiffer Conjecture for the fluid-elastic
semigroup }
\author{Karoline Disser}
\address{Universität Kassel\\
 Institut für Mathematik \\
 Heinrich-Plett-Stra{ß}e 40 \\
 34132 Kassel, Germany }
\email{kdisser@mathematik.uni-kassel.de}
\begin{abstract}
In a series of papers, Avalos and Triggiani established the fluid-elastic
semigroup for the coupled Stokes-Lamé system modelling the coupled
dynamics of a linearly elastic structure immersed in a viscous Newtonian
fluid. They analyzed the spectrum of its generator and proved that
the semigroup is strongly stable, if the domain of the structure satisfies
a geometric condition, i.e. it is not a \emph{bad domain}. We extend
these results in two directions: first, for bad domains, we prove
a decomposition of the dynamics into a strongly stable part and a
\emph{pressure wave}, a special solution of the Dirichlet-Lamé system,
that can be determined from the initial values. This fully characterizes
the long-time behaviour of the semigroup. Secondly, we show that the
characterization of bad domains is equivalent to the Schiffer problem.
This strengthens the conjecture that balls are the only bad domains
and establishes a direct connection to geometric analysis. We also
discuss implications for associated nonlinear systems. 
\end{abstract}

\keywords{fluid-structure interaction, fluid-elastic semigroup, strong stability,
Schiffer conjecture, geometric condition, invariance, pressure waves}
\thanks{The author gratefully acknowledges financial support by DFG project
FOR 5528.}
\subjclass[2000]{74F10 (76D05 35A01 35L10)}

\maketitle
\global\long\def\Ls{L_{S}}%

\global\long\def\XsK{\mathbf{X}}%

\global\long\def\AsK{\mathcal{A}}%

\global\long\def\ScsK{\mathcal{S}}%

\global\long\def\Xs{\bar{\mathbf{X}}}%

\global\long\def\As{\bar{\mathcal{A}}}%

\global\long\def\Scs{\bar{\mathcal{S}}}%

\global\long\def\Es{\mathbf{E}}%

\global\long\def\Ws{\mathbf{W}}%

\section{Introduction}

We consider the linear system

\begin{equation}
\begin{cases}
\begin{array}{rcll}
\dot{u}-\Div(S(u,p)) & = & 0 & \textrm{in }(0,T)\times\OF,\\
\Div(u) & = & 0 & \textrm{in }(0,T)\times\OF,\\
S(u,p)n & = & L(\xi)n & \textrm{on }(0,T)\times\dOS,\\
u & = & \dot{\xi} & \textrm{on }(0,T)\times\dOS,\\
u & = & 0 & \text{on }(0,T)\times\dO,\\
\ddot{\xi}-\textrm{div}(L(\xi))+\xi & = & 0 & \textrm{in }(0,T)\times\OS,\\
u(0) & = & u_{0} & \textrm{in }\OF,\\
\xi(0) & = & \xi_{0} & \textrm{in }\OS,\\
\dot{\xi}(0) & = & \xi_{1} & \textrm{in }\OS,
\end{array}\end{cases}\label{eq:linsys}
\end{equation}
modeling the dynamic interaction of a linear elastic structure with
an incompressible Newtonian fluid in a given time interval $(0,T)$,
$T>0$. Here, $\Omega\subset\mathbb{R}^{3}$ is a bounded Lipschitz
domain, and the elastic structure is located at the bounded Lipschitz
domain $\OS$, completely immersed in the fluid in the Lipschitz domain
$\OF$. This condition is expressed by the relation $\overline{\OS}\cup\OF=\Omega$
and we assume that there is no contact of structure and outer boundary
at this level of the formulation, $\overline{\OS}\cap\partial\Omega=\emptyset$.
The interface $\dOS$ is where elastic structure and fluid meet. Here,
the transmission boundary conditions of continuity of velocities and
continuity of forces hold. The exterior unit normal vector field of
$\OF$ at $\dOS$ and $\partial\Omega$ is denoted by $n$. The unkowns
are the fluid velocity $u\colon(0,T)\times\OF\to\mathbb{R}^{3}$,
fluid pressure $p\colon(0,T)\times\OF\to\mathbb{R}^{3}$ and elastic
displacement $\xi\colon(0,T)\times\OS\to\mathbb{R}^{3}$. We denote
the symmetric gradient of a vector field $v$ by 
\[
D(v):=\frac{1}{2}\left(\nabla v+(\nabla v)^{T}\right).
\]
The Newtonian fluid stress tensor is
\[
S(u,p):=2\nu D(u)-p\mathrm{Id}
\]
with given viscosity $\nu>0$ and the Lamé elastic stress tensor is
given by
\[
L(\xi):=\lambda_{0}D(\xi)+\lambda_{1}\Div(\xi)\mathrm{Id},
\]
with Lamé constants $\lambda_{0},\lambda_{1}>0$. Here, as usual,
$\nabla$ and $\Div$ are spatial gradient and (line-wise) divergence
and we use the notation $\dot{v},\ddot{v}$ to denote first and second
partial derivatives with respect to the time variable.

System \eqref{eq:linsys} arises as the linearization of the non-linear
coupled system of fluid-elastic interaction that includes the dynamic
change of the fluid domain according to the displacement of the structure.
This system is discussed in some more detail in Subsection \ref{subsec:fullynonlinear}. 

In a series of papers, \cite{4AT2009,5AT,8AT2009,Astrongstab,AT2,ATboundary,ATuniformstab},
Avalos and Triggiani studied system \eqref{eq:linsys} and the corresponding
simpler system with the Lamé part replaced by the wave equation. In
particular, by applying the Lumer-Phillips theorem, they established
the existence of a strongly continuous semigroup of contractions associated
to this problem and characterized its long-time behaviour based on
properties of the spectrum of its generator and the Arendt-Batty-Lyubich-Vu
criterion \cite{ArendtBatty1988,LyubichPhong1988}. For a large class
of \emph{good domains} $\OS$, as well as in the case of additional
damping, they showed strong stability of the semigroup \cite{5AT,ATuniformstab,ATboundary,8AT2009,4AT2009,AT2,Astrongstab}.
Our characterization of long-time asymptotic behaviour of solutions
extends these stability results to a related statement in the general
case of \emph{bad domains}, when non-trivial time-periodic solutions
that we call \emph{pressure waves} may occur. It fully determines
the attractor of the semigroup and provides convergence to a limit
solution that can be read off from the initial data. Pressure waves
were also identified in the works of Avalos and Triggiani, but not
studied in the long-time limit. 

\subsection{Organization and discussion of the main ideas}

\subsubsection*{Strong stability on bad domains}

In Section \ref{sec:ATsemigroup}, we recall the results of Avalos
and Triggiani, in a form adapted to the further analysis (not all
results can be covered). In Section \ref{sec:Invariance}, we study
pressure waves in some detail and show that the fluid-elastic semigroup,
after factorization with the one-dimensional kernel of the generator
$\AsK$, respects the eigenvalue decomposition corresponding to pressure
wave states. Thus, on bad domains, the dynamics can be decomposed
and we take out pressure waves. By exploiting the spectral properties
of $\AsK$ shown by Avalos and Triggiani, we can apply the Arendt-Batty-Lyubich-Vu
criterion again on the remaining subspace semigroup to obtain strong
stability also in this case.

\subsubsection*{Bad domains are Schiffer domains and vice versa}

In Section \ref{sec:Schiffer}, we turn to the related but conceptually
different topic of characterizing bad domains. We show that a domain
$\OS$ is bad if and only if it is a Schiffer domain, which means
that the overdetermined \emph{scalar} Neumann eigenvalue problem with
additional constant Dirichlet boundary data, 
\[
\begin{split}\begin{cases}
\begin{array}{rcll}
-\Delta u & = & \mu_{S}u, & \text{in }\OS,\\
\partial_{n}u & = & 0, & \text{on }\dOS,\\
u & = & c, & \text{on }\dOS.
\end{array}\end{cases}\end{split}
\]
has a non-trivial solution. The Schiffer conjecture is a well-known
open problem in geometric analysis. It states that the only Schiffer
domains are the balls. A brief discussion of literature on this problem
is given at the beginning of Section \ref{sec:Schiffer}. Our result
strongly connects the analysis of fluid-structure interaction to geometric
analysis and further topics in harmonic analysis via the Pompeiu problem.
Clearly, this helps to frame the problem in the fluid-elastic context.
It is unclear whether the connection can help to solve the Schiffer
or Pompeiu problems, because the connection is elementary. In the
calculations, we recover a sufficient condition for $\OS$ to be a
ball in terms of a third-order Neumann condition for $u$ in Corollary
\ref{cor:3neumannBC}. It will be interesting to explore the connection
further. 

\subsubsection*{Implications for related nonlinear systems }

With the above results, the linear theory for the fluid-(linearly)
elastic system is fairly complete: global well-posedness is known
in a mild and a strong setting, cf.~\cite{DisserLuckas2025} for
the second result, and the asymptotic behaviour is fully characterized
for both good and bad domains. The geometric characterization of bad
domains can be ,,outsourced'' to geometric analysis. So it makes sense
to look at first implications for the non-linear counterparts in Section
\ref{sec:nonlinear}. Pressure waves also solve the corresponding
nonlinear systems without damping. This means that global existence
and stability results must account for the corresponding time-periodic
dynamics. The linear theory gives some of the tools for this analysis,
like invariance. For the semilinear system that includes fluid convection,
but keeps the domains fixed, this is sufficient to fully characterize
the long-time behaviour as well, Theorem \ref{thm:longtimenonlinear}.
For the fully nonlinear system, further investigations are needed. 

\subsection{Related results on fluid-elastic interaction }

In addition to the works of Avalos and Triggiani, the linear system
is studied in \cite{GMZZ2014}, in the context of the occurence of
resonance in parabolic-hyperbolic coupled systems. Regarding the existence
of solutions for related nonlinear systems, we refer to \cite{Boulakia2007,CS2005,KT2012,IKLT2017,RV2014,KO2023,KO2024,BKS2024}.
Note that these results include some viscosity on the elastic parts,
so pressure waves do not occur. In some cases, exponential convergence
to the rest state is obtained \cite{IKLT2017,KO2023,KO2024}. 

In \cite{DL2022,DisserLuckas2025}, we proved existence of strong
solutions for system \ref{eq:linsys} with fluid convection, locally
in time and globally for small data, and we characterized the long-time
behaviour. With the help of the present analysis, these results can
be slightly improved. This is discussed in Subsection \ref{subsec:resultsluckas}.

There is also a large body of literature on related models that concern
the interaction with elastic shells, elastic beams, compressible or
inviscid fluids. We refer e.g. to \cite{CS2006,LR2014,MMNRT2022,GH2016,GHL2019,MC2015,KT2024,AKT2025,BKS2024compressible,MRR2020}
and references therein. 

\section{The fluid-elastic semigroup of Avalos and Triggiani\label{sec:ATsemigroup}}

\subsection{Definition of the semigroup and its generator}

Avalos and Triggiani \cite{Astrongstab,5AT,4AT2009,8AT2009,AT2} established
the existence of a strongly continuous semigroup of contractions,
$(\ScsK(t))_{t>0},$
\[
\ScsK(t)\colon(\xi_{0},\xi_{1},u_{0})\in\XsK\mapsto(\xi(t),\dot{\xi}(t),u(t))\in\XsK,
\]
with generator 
\[
\AsK\colon D(\AsK)\subset\XsK\to\XsK,
\]
associated to system \eqref{eq:linsys} in the energy space 
\[
\XsK:=H^{1}(\OS)^{3}\times L^{2}(\OS)^{3}\times\Ls,
\]
where $L^{2}(O)$ and $H^{m}(O),m\in\mathbb{N}$, denote the usual
Lebesgue and Sobolev spaces on a domain $O$. The exponent $^{3}$
will often be ommited for simplicity. The space $H^{1}(\OS)^{3}$
is endowed with the inner product 
\begin{equation}
(\xi,\tilde{\xi})_{H^{1}(\Omega)}=\int_{\OS}\xi\cdot\tilde{\xi}+\int_{\OS}\sigma(\xi):\nabla\tilde{\xi}=\int_{\OS}\xi\cdot\tilde{\xi}+\int_{\OS}\sigma(\xi):D(\tilde{\xi}).\label{eq:innerproduct}
\end{equation}
By Korn's inequality, the induced norm is equivalent to the standard
one. The space
\[
\Ls:=\left\{ u\in L^{2}(\OF)^{3}:\Div u=0,u\cdot n|_{\dOS}=0\right\} 
\]
of weakly divergence-free vector fields with partial zero normal component
at the outer boundary is defined based on the elimination of the pressure
variable from \eqref{eq:linsys}. It is done in a natural way by an
harmonic function that satisfies the correct boundary conditions.
More precisely, the following construction is used, \cite[Section 2.2]{8AT2009}.
\begin{lem}
Given $(\xi,\zeta,u)\in D(\AsK)$, there exists a unique harmonic
pressure function $p=P_{M}(u)+P_{D}(\xi)\in L^{2}(\OF)$, where $P_{M}(u)$
and $P_{D}(\xi)$ solve the inhomogeneous mixed Dirichlet-Neumann
problems

\[
\begin{cases}
\begin{array}{rcll}
\Delta P_{M} & = & 0, & \text{in }\OF,\\
(\nabla P_{M})n & = & (\Delta u)|_{\partial\Omega}n, & \text{on }\partial\Omega,\\
P_{M} & = & n^{T}D(u)n, & \text{on }\dOS,
\end{array}\end{cases}
\]
and
\[
\begin{cases}
\begin{array}{rcll}
\Delta P_{D} & = & 0, & \text{in }\OF,\\
(\nabla P_{D})n & = & 0, & \text{on }\partial\Omega,\\
P_{D} & = & -n^{T}L(\xi)n, & \text{on }\dOS,
\end{array}\end{cases}
\]
respectively, in a weak sense. 
\end{lem}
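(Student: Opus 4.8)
The plan is to identify the pressure as the unique weak solution of a mixed Dirichlet--Neumann (Zaremba-type) problem for the Laplacian on $\OF$, with the Dirichlet condition imposed on the interface $\dOS$ and the Neumann condition on the outer boundary $\dO$. The problem is linear in its boundary data, which split into a part depending only on $u$ --- the Dirichlet datum $n^{T}D(u)n$ on $\dOS$ together with the Neumann datum $(\Delta u)\cdot n$ on $\dO$ --- and a part depending only on $\xi$ --- the Dirichlet datum $-n^{T}L(\xi)n$ on $\dOS$ together with homogeneous Neumann datum on $\dO$. Consequently the solution decomposes as $p=P_{M}(u)+P_{D}(\xi)$, and it is enough to treat each of the two problems separately by the same scheme.

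First I would check that, for $(\xi,\zeta,u)\in D(\AsK)$, the boundary data make sense in suitable trace spaces. Membership in $D(\AsK)$ provides $u\in H^{1}(\OF)^{3}$ with $\Div u=0$ and $u|_{\dO}=0$, and $\xi\in H^{1}(\OS)^{3}$ with $\Div(L(\xi))\in L^{2}(\OS)^{3}$; hence $n^{T}L(\xi)n$ is well defined on $\dOS$, in a trace space of negative order, by the normal-trace theorem for fields with $L^{2}$-divergence, and similarly one makes sense of $n^{T}D(u)n$. For the outer Neumann datum one uses the identity $\Div(\Delta u)=\Delta(\Div u)=0$: thus $\Delta u$, although only a distributional vector field, is divergence-free and therefore has a well-defined normal component $(\Delta u)\cdot n$ on $\dO$, again in a negative-order trace space. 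This limited regularity of the data is precisely why $p$ can only be expected in $L^{2}(\OF)$ and why the two problems are to be understood ``in a weak sense''.

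Next I would set up the variational framework on the closed subspace $V:=\{\phi\in H^{1}(\OF):\phi|_{\dOS}=0\}$. Since $\dOS$ is a Lipschitz part of $\dOF$ of positive surface measure, the Poincar\'e--Friedrichs inequality makes $\phi\mapsto\|\nabla\phi\|_{L^{2}(\OF)}$ a norm on $V$ equivalent to the full $H^{1}$-norm, so the Dirichlet form $a(\phi,\psi):=\int_{\OF}\nabla\phi\cdot\nabla\psi$ is continuous and coercive on $V$. For sufficiently regular data one subtracts a lift of the interface datum and applies Lax--Milgram; for the genuinely rough data one instead passes to the transposed (very weak) formulation, testing against solutions of the homogeneous adjoint mixed problem, which yields $P_{M},P_{D}\in L^{2}(\OF)$ depending linearly and continuously on $u$ and $\xi$. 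Testing with $\phi\in C_{c}^{\infty}(\OF)\subset V$ forces $\Delta P_{M}=\Delta P_{D}=0$ in the sense of distributions, so both are harmonic and $p:=P_{M}+P_{D}\in L^{2}(\OF)$. Uniqueness is then immediate: the difference of two solutions is a harmonic element of $V$ with vanishing Neumann trace on $\dO$, hence $0$ by coercivity.

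I expect the main obstacle to be exactly this last step in the low-regularity regime. The interface Dirichlet data are only of negative order and $\OF$ is merely Lipschitz, so the adjoint Zaremba problem has no $H^{2}$-regularity and a naive $H^{1}$ Lax--Milgram argument is unavailable; one needs a careful transposition argument with the correct test space and, since the boundary conditions are mixed, a matching trace/Green identity. This is precisely the construction carried out in \cite[Section~2.2]{8AT2009}, on which the statement relies, so in practice I would reproduce or directly invoke that construction.
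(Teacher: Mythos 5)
The paper does not prove this lemma but states it as a construction cited from \cite[Section~2.2]{8AT2009}, and your proposal correctly reconstructs the standard scheme behind it: a linear splitting of the Zaremba data into a $u$-part and a $\xi$-part, normal-trace arguments (using $\Div u=0$, hence $\Div\Delta u=0$, for the outer Neumann datum, and $\Div L(\xi)\in L^{2}$ for the interface datum), and a variational/transposition solution on $V=\{\phi\in H^{1}(\OF):\phi|_{\dOS}=0\}$, with uniqueness from coercivity. Since you correctly identify that the genuinely technical point is the very weak formulation for negative-order interface data on a Lipschitz domain, and explicitly fall back on the same reference the paper invokes, your attempt matches the paper's treatment.
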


With this construction, the generator $\AsK$ can be given more explicitly,
i.e.

\[
\AsK\left(\begin{array}{c}
\xi\\
\zeta\\
u
\end{array}\right)=\left(\begin{array}{ccc}
0 & -\mathrm{Id} & 0\\
-\Div L+\mathrm{Id} & 0 & 0\\
\nabla P_{D} & 0 & -\Delta+\nabla P_{M}
\end{array}\right)\left(\begin{array}{c}
\xi\\
\zeta\\
u
\end{array}\right).
\]
The characterization of the domain $D(\AsK)$ will be discussed further
below. Even though it is elliminated from the semigroup, the pressure
function plays an important role in fluid-structure interaction. For
example, in Subsection \ref{subsec:Definition-of-Pressure} , it is
shown that spatially constant (hydrostatic) pressure may still be
associated with interesting elastic dynamics. There is some confusion
about this point in the wider literature, because the situation for
the fluid is unlike the one created by the classical Navier-Stokes
equations in the following sense: the Dirichlet condition at $\dOS$
in \eqref{eq:linsys} shows that the pressure is fully determined
by the fluid-elastic dynamics, not just up to a constant. Or, put
the other way around: changing the hydrostatic pressure will change
the elastic dynamics. 

Nevertheless, there is an invariant scalar quantity that needs to
be factorized out of the dynamics: it is associated to the choice
of $\OS$ as a suitable reference configuration. More precisely, consider
the quantity
\[
K_{\xi}(t):=\int_{\dOS}\xi(t)\cdot n.
\]
If $(u,\xi)$ solve system \eqref{eq:linsys} in a sufficiently smooth
way, then
\[
\dot{K}_{\xi}(t)=\int_{\dOS}\dot{\xi}(t)\cdot n=\int_{\dOS}u(t)\cdot n=\int_{\OF}\Div u(t)+\int_{\partial\Omega}u(t)\cdot n=0,
\]
so $K_{\xi_{0}}:=\int_{\dOS}\xi_{0}\cdot n$ remains invariant. This
invariance was observed by Avalos and Triggiani and its rigorous proof
for the semigroup is cited below in Lemma \ref{lem:Kxi}. Due to $K_{\xi}(t)=\int_{\OS}\Div\xi(t)$,
this quantitiy can be seen as a linearized measure of the mean volume
change induced by $\xi$ on $\OS$. As the fluid is incompressible,
no mean volume change is possible. It is thus reasonable to assume
that, at initial time, $|\xi_{0}(\OS)|=|\OS|$, expresed by $K_{\xi_{0}}\overset{!}{=}0$.
We define
\[
\bar{H}^{1}(\OS):=\left\{ \xi\in H^{1}(\OS):K_{\xi}=0\right\} ,
\]
and, accordingly, 
\[
\Xs:=\left\{ (\xi,\zeta,u)\in\XsK:K_{\xi}=0\right\} .
\]

\begin{lem}
\label{lem:Kxi}The space $\Xs$ is a closed subspace of $\XsK$ that
is invariant under the semigroup. We have $\Xs=\XsK\setminus\mathrm{ker}\AsK$,
where $\mathrm{ker}\AsK$ is one-dimensional, given by 
\[
(\xi,\eta,u)\in\mathrm{ker}\AsK\Leftrightarrow\xi=\kappa\varphi,\eta=0,u=0,
\]
with parameter $\kappa\in\mathbb{R}$ and $\varphi\in H^{1}(\OS)$
a weak solution of the Neumann problem 
\begin{align}
\begin{cases}
\begin{array}{rcll}
\Div(L(\varphi)) & = & \varphi & \text{ in }\OS,\\
L(\varphi)n & = & n & \text{ on }\dOS.
\end{array}\end{cases}\label{eq:xi_stat_system-1}
\end{align}
In particular, 
\begin{equation}
\xi\in H^{1}(\OS)\Leftrightarrow\xi-\kappa_{0}\varphi\in\bar{H}^{1}(\OS),\kappa_{0}=\frac{K_{\xi}}{\Vert\varphi\Vert_{H^{1}(\OS)}}.\label{eq:Kxigone}
\end{equation}
 
\end{lem}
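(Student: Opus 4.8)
The plan is to establish the three assertions in turn---closedness of $\Xs$, its invariance under $\ScsK$, and the description of $\ker\AsK$ together with the splitting \eqref{eq:Kxigone}---the first two being elementary and the third carrying the content. For closedness, note that $\xi\mapsto K_{\xi}=\int_{\dOS}\xi\cdot n=\int_{\OS}\Div\xi$ is a bounded linear functional on $H^{1}(\OS)^{3}$ that is not identically zero (e.g.\ $K_{\xi}=3|\OS|$ for $\xi(x)=x$), so $\bar{H}^{1}(\OS)$ is a closed subspace of codimension one and, likewise, $\Xs=\ker\bigl((\xi,\zeta,u)\mapsto K_{\xi}\bigr)$ is a closed subspace of $\XsK$ of codimension one. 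To locate $\bar{H}^{1}(\OS)$ precisely, I would solve \eqref{eq:xi_stat_system-1} by Lax--Milgram: the form $a(\varphi,\psi):=\int_{\OS}L(\varphi):D(\psi)+\int_{\OS}\varphi\cdot\psi$ is bounded and, by Korn's inequality, coercive on $H^{1}(\OS)^{3}$, while $\psi\mapsto\int_{\dOS}n\cdot\psi$ is bounded and (testing with $\psi(x)=x$) nonzero, so there is a unique weak solution $\varphi$, and it is nonzero. Since $a(\cdot,\cdot)$ is the inner product \eqref{eq:innerproduct}, the weak formulation reads $(\varphi,\psi)_{H^{1}(\OS)}=K_{\psi}$ for all $\psi$, i.e.\ $\varphi$ is the Riesz representative of $K$; hence $\bar{H}^{1}(\OS)=\{\varphi\}^{\perp}$, $K_{\varphi}=\|\varphi\|_{H^{1}(\OS)}^{2}>0$, and decomposing $\xi$ into its component along $\varphi$ and its component in $\bar{H}^{1}(\OS)$ yields \eqref{eq:Kxigone}.

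For invariance, since $K$ is bounded, $D(\AsK)$ is dense in $\XsK$, and $\ScsK$ is strongly continuous, it suffices to verify $K_{\xi(t)}=K_{\xi_{0}}$ for data $(\xi_{0},\xi_{1},u_{0})\in D(\AsK)$. For such data the orbit $(\xi(t),\zeta(t),u(t))=\ScsK(t)(\xi_{0},\xi_{1},u_{0})$ stays in $D(\AsK)$, is $C^{1}$ into $\XsK$, and $(u,\xi,p)$ is a strong solution of \eqref{eq:linsys} by the construction of the semigroup \cite{8AT2009}, so that the formal computation recalled above becomes rigorous:
\[
\dot{K}_{\xi}(t)=\int_{\dOS}\dot{\xi}(t)\cdot n=\int_{\dOS}u(t)\cdot n=\int_{\OF}\Div u(t)+\int_{\dO}u(t)\cdot n=0,
\]
here using $\Div u(t)=0$ in $\OF$, the interface condition $u(t)=\dot{\xi}(t)$ on $\dOS$, the outer condition $u(t)=0$ on $\dO$, and the divergence theorem on $\OF$, whose boundary $\dOF$ is the disjoint union of $\dOS$ and $\dO$ (since $\overline{\OS}\cap\dO=\emptyset$). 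Hence $K_{\xi(t)}\equiv K_{\xi_{0}}$, and $\Xs$ is $\ScsK$-invariant.

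For the kernel, let $X=(\xi,\zeta,u)\in D(\AsK)$ with $\AsK X=0$. The first row of the explicit form of $\AsK$ gives $\zeta=0$; equivalently, the constant curve $t\mapsto X$ is a stationary solution of \eqref{eq:linsys}, so $\dot u=0$, $\dot\xi=0$, and the interface and outer conditions give $u=0$ on $\dOS$ and on $\dO$. The stationary Stokes system $-\Div S(u,p)=0$, $\Div u=0$ in $\OF$ with $u|_{\dOF}=0$ then forces $u\equiv0$ and $p\equiv\mathrm{const}=:-c$, so the interface force balance $S(u,p)n=L(\xi)n$ reduces to $L(\xi)n=cn$ on $\dOS$, while the elasticity equation reads $\Div L(\xi)=\xi$ in $\OS$; by linearity and coercivity of $a$, $\xi=c\varphi$ (and $\xi=0$ when $c=0$). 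Conversely, the solution $\varphi$ of \eqref{eq:xi_stat_system-1} satisfies $(\varphi,0,0)\in D(\AsK)$ and $\AsK(\varphi,0,0)=0$ (its associated pressure $P_{D}(\varphi)\equiv-1$ being constant), so $\ker\AsK=\{(\kappa\varphi,0,0):\kappa\in\mathbb{R}\}$ is one-dimensional. Finally, $(\varphi,0,0)\notin\Xs$ because $K_{\varphi}>0$ and $\Xs$ has codimension one, whence $\XsK=\Xs\oplus\ker\AsK$; moreover, by the identity $(\xi,\varphi)_{H^{1}(\OS)}=K_{\xi}$ from the first step, $\bigl\langle(\xi,\zeta,u),(\varphi,0,0)\bigr\rangle_{\XsK}=K_{\xi}$, so the sum is orthogonal, $\Xs=(\ker\AsK)^{\perp}$, which is the asserted relation.

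The step I expect to be the main obstacle is the kernel computation, and within it the reduction of the boundary behaviour of $\xi$ to the single scalar condition $L(\xi)n=cn$ on $\dOS$: this hinges on carefully tracking how the fluid stress---especially the pressure, which in this problem is genuinely determined and not merely defined up to a constant---enters the interface force balance, and it is precisely this that makes $\ker\AsK$ one-dimensional (the bound $\le1$ via Stokes uniqueness forcing $u\equiv0$ and $p$ constant, the bound $\ge1$ via $\varphi\neq0$). Working directly with the stationary form of \eqref{eq:linsys} rather than with the pressure-eliminated generator keeps this transparent, because the only surviving degree of freedom is then the value $-c$ of the hydrostatic pressure, which by linearity simply rescales $\varphi$.
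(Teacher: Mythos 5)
Your proof is correct, and it is considerably more self-contained than the paper's, which establishes the kernel characterization and the semigroup invariance essentially by citation to Avalos--Triggiani and only carries out the short Gauss-theorem computation $K_{\varphi}=\Vert\varphi\Vert_{H^{1}(\OS)}^{2}$. You instead re-derive the cited pieces: you solve \eqref{eq:xi_stat_system-1} by Lax--Milgram, observe that $\varphi$ is precisely the Riesz representative of the bounded functional $K$ with respect to the inner product \eqref{eq:innerproduct} (so that $\bar{H}^{1}(\OS)=\{\varphi\}^{\perp}$ and orthogonality in $\XsK$ is automatic), prove invariance by differentiating $K_{\xi(t)}$ along classical orbits from $D(\AsK)$ and using density, and obtain $\mathrm{ker}\,\AsK$ by a clean stationary-Stokes uniqueness argument that forces $u\equiv0$, $p\equiv\mathrm{const}$ and hence $\xi=c\varphi$. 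The Riesz-representative viewpoint is the cleanest way to see both the orthogonality and the projection formula, and the Stokes uniqueness step makes it transparent why the kernel is exactly one-dimensional. One side benefit of your route: the Riesz decomposition gives $\kappa_{0}=K_{\xi}/\Vert\varphi\Vert_{H^{1}(\OS)}^{2}$, which is consistent with the paper's own calculation $K_{\varphi}=\Vert\varphi\Vert_{H^{1}(\OS)}^{2}$; the unsquared norm in the statement \eqref{eq:Kxigone} appears to be a typo.
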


\begin{proof}
Clearly, $\Xs$ is closed in $\XsK$.The characterization of the kernel
of $\AsK$ and thus the invariance is shown in \cite[Theorem 3.2]{4AT2009,8AT2009}.
In particular, this shows that $\AsK$ is not invertible on $\XsK$,
so the (full) semigroup cannot be strongly stable. The characterization
\eqref{eq:Kxigone} can be verified by a short calculation using the
Gauss theorem and \eqref{eq:xi_stat_system-1},
\[
\int_{\dOS}\varphi\cdot n=\int_{\dOS}\varphi\cdot\sigma(\varphi)n=\int_{\OS}\Div(L(\varphi))\cdot\varphi+L(\varphi):\nabla\varphi=\Vert\varphi\Vert_{H^{1}(\OS)}^{2}.
\]
 
\end{proof}
In the following, we may thus also consider the strongly continuous
semigroup of contractions $(\Scs(t))_{t>0}$, which is just the restriction
of $(\ScsK(t))_{t>0}$ to $\Xs$ with generator $\As\colon D(\As)\cap\Xs\subset\Xs\to\Xs$
\cite[I.5.12]{EngelNagel2000}. We need some more properties of $D(\As)$.
The existence of the semigroup on $\XsK$ was proved by the Lumer-Phillips
theorem \cite{LumerPhillips1961}, using that the operator $\AsK$
is dissipative. The maximality corresponds to the identification of
the domain 
\begin{equation}
D(\AsK)\subseteq H^{1}(\OS)\times H^{1}(\OS)\times H^{1}(\OF)\cap\Ls,\label{eq:DA}
\end{equation}
taking the pressure and the boundary conditions into account. The
details are given in \cite[(A.1) - (A.5)]{AT2}. Here, in addition
to the regularity in \eqref{eq:DA}, it is also important that for
all $(\xi,\zeta,u)\in D(\AsK)$, 
\begin{equation}
\Div L(\xi)\in L^{2}(\OS),\quad\text{and }L(\xi)|_{\dOS}n\in H^{-1/2}(\dOS),\label{eq:DA2}
\end{equation}
and that the resolvent of $\AsK$ is not compact, due to the fact
that there is no gain in regularity for $\xi$, \cite[Theorem 2.2]{AT2}.
Further information on the spectrum of $\As$ is given in Theorem
\ref{thm:spectrum}.

\subsection{\label{subsec:Definition-of-Pressure}Definition of \emph{pressure
waves} and \emph{bad domains}}

When considering the fluid part of system \eqref{eq:linsys}, it is
natural to conjecture that $u(t)\overset{t\to\infty}{\to}0$ and that,
as a consequence, when looking just at the first line, 
\[
p(t)\overset{t\to\infty}{\to}q(t)=\frac{1}{|\OF|}\int_{\OF}p(t,x)\,\mathrm{d}x,
\]
a spatially constant function. These properties were shown rigorously
for strong solutions of the corresponding non-linear system \eqref{eq:nonlin_system}.
Also the existence of strong solutions of the linear system \eqref{eq:linsys},
\cite[Theorem 2]{DisserLuckas2025}, and small-data global existence
of strong solutions for \eqref{eq:nonlin_system} is established,
cf.~Subsection~\ref{subsec:resultsluckas}. This convergence of
$u$ to rest also follows from the strong stability of the semigroup
proved in \cite{4AT2009}, but only under additional geometric assumptions
that we discuss also in this section. The vanishing of the fluid velocity
indicates that it is natural to consider solutions $(\eta,q)$ of
the system
\begin{align}
\begin{split}\begin{cases}
\begin{array}{rcll}
\ddot{\eta}-\Div(L(\eta))+\eta & = & 0 & \text{in }(0,T)\times\OS,\\
\eta & = & 0 & \text{on }(0,T)\times\dOS,\\
L(\eta)n & = & qn & \text{on }(0,T)\times\dOS,
\end{array}\end{cases}\end{split}
\label{eq:xi_system_A}
\end{align}
with scalar $q(t)\in\mathbb{R}$ as the candidate limit dynamics for
system \eqref{eq:linsys}. Note the presence of two boundary conditions,
but no initial condition. We call non-trivial solutions $(\eta,q)\neq(0,0)$
of \ref{eq:xi_system_A}\emph{ pressure waves}. Sometimes the term
is used for just the displacement component $\eta\neq0$. Pressure
waves can be characterized by the corresponding overdetermined eigenvalue
problem 
\begin{align}
\begin{split}\begin{cases}
\begin{array}{rcll}
-\Div(L(\psi)) & = & \mu\psi & \text{in }\OS,\\
\psi & = & 0 & \text{on }\dOS,\\
L(\psi)n & = & qn & \text{on }\dOS.
\end{array}\end{cases}\end{split}
\label{eq:xi_overdet_EP-1}
\end{align}
More precisely, by definition, strong solutions $\psi$ of \eqref{eq:xi_overdet_EP-1}
are contained in the set of eigenfunctions of the Dirichlet-Lamé operator
\begin{align*}
\mathcal{L}(\psi) & =-\Div(L(\psi))\qquad\text{with domain}\\
D(\mathcal{L}) & :=\H1_{0}^{1}(\OS)\cap\H1^{2}(\OS)\subset\Lp^{2}(\OS).
\end{align*}
This operator is self-adjoint positive definite and has compact resolvent.
It admits countably many eigenvalues $(\mu_{k})\subset(0,\infty)$
of finite multiplicity that can only cluster at infinity. The corresponding
eigenfunctions $(\tilde{\psi}_{n})\subset D(\mathcal{L})$ (in fact,
they are smooth), with indices $n\in\mathbb{N}$ solving 
\begin{align*}
\begin{split}\begin{cases}
\begin{array}{rcll}
\mathcal{L}(\psi_{n}) & = & \mu_{n}\psi_{n} & \text{in }\OS,\\
\psi_{n} & = & 0 & \text{on }\dOS,
\end{array}\end{cases}\end{split}
\end{align*}
can be chosen to form an orthonormal basis $(\tilde{\psi}_{n})$ of
$L^{2}(\OS)$. Then the sequence of eigenfunctions $(\psi_{n}:=\frac{1}{\sqrt{1+\mu_{n}}}\tilde{\psi}_{n})$
is an orthonormal basis of the Sobolev space $H_{0}^{1}(\OS)$ with
zero boundary trace and inner product as in \eqref{eq:innerproduct}.
Clearly, $H_{0}^{1}(\OS)\subset\bar{H}^{1}(\OS)$. The set of pressure
waves is thus given as 
\begin{align}
\omega_{T}= & \biggl\{\eta:\eta(t,y)=\sum_{k\in K}(a_{k}\sin(\sqrt{1+\mu_{k}}t)+b_{k}\cos(\sqrt{1+\mu_{k}}t))\psi_{k}(y):(a_{k}),(b_{k})\in\mathbb{R}\biggr\}\label{eq:charomega}\\
 & \cap C([0,T];H_{0}^{1}(\OS))\cap C^{1}([0,T];L^{2}(\OS)),\nonumber 
\end{align}
where %
\[
K:=\{k\in\mathbb{N}\colon\exists(\psi_{k},q_{k})\neq(0,0)\text{ solution of }\eqref{eq:xi_overdet_EP-1}_{\mu_{k}}\}
\]
is the set of indices $k\in\mathbb{N}$ of (non-trivial) eigenfunctions
$\psi_{k}$ that solve \eqref{eq:xi_overdet_EP-1}. Here, the regularity
imposed on $\omega_{T}$ corresponds to the regularity of the semigroup
mild solutions for \eqref{eq:linsys}. Clearly $K\subseteq\mathbb{N}$,
and the structure of this set and of the corresponding eigenvalues
and eigenfunctions depends on the geometry of $\OS$. Following the
work of Avalos and Triggiani \cite{5AT,4AT2009,AT2}, we use the following
definition. 
\begin{defn}
The domain $\OS$ is called
\end{defn}

\begin{itemize}
\item a \emph{good} domain, if $\psi=0$, $q=0$ is the only solution of
(\ref{eq:xi_overdet_EP-1}), i.e. $K=\emptyset$. 
\item a \emph{bad} domain, if (\ref{eq:xi_overdet_EP-1}) admits a non-zero
solution $(\psi_{k},q_{k})$ for some $\mu_{k}>0$, i.e. $K\neq\emptyset$.
\end{itemize}
\begin{prop}
\label{prop:ATgooddomains}It was shown in \cite{5AT,AT2} for a similar
wave-type system that every domain which is partially flat, partially
spherical, partially elliptic, partially hyperbolic or partially parabolic
is a \emph{good} domain, and this result was transferred to the Lamé
system in \cite[Remark~1.1]{ATboundary}. 
\end{prop}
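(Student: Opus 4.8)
The statement is due to Avalos and Triggiani; we describe the underlying mechanism. Suppose, for a contradiction, that $\OS$ is a \emph{bad} domain, so that for some $\mu=\mu_{k}>0$ there are a nontrivial (hence smooth) $\psi\in H^{1}_{0}(\OS)$ and a scalar $q\in\mathbb{R}$ with $-\Div(L(\psi))=\mu\psi$ in $\OS$, $\psi=0$ and $L(\psi)n=qn$ on $\dOS$, as in \eqref{eq:xi_overdet_EP-1}. First reduce to $q\neq0$: if $q=0$, then both $\psi$ and its traction $L(\psi)n$ vanish on all of $\dOS$, so $\psi$ has vanishing Cauchy data on $\dOS$ for the equation $-\Div(L(\psi))=\mu\psi$. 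Since $-\Div(L(\cdot))$ is strongly elliptic with constant coefficients, every hypersurface is non-characteristic and Holmgren's uniqueness theorem applies near $\dOS$, while interior real-analyticity (elliptic regularity) then propagates the vanishing through the connected set $\OS$; hence $\psi\equiv0$, contradicting nontriviality. So from now on $q\neq0$.

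The idea is to turn the \emph{inhomogeneous} ($q\neq0$) overdetermined data on the distinguished boundary piece into a \emph{homogeneous} Cauchy problem for an auxiliary field obtained by differentiating along the geometry of that piece. Take first the \emph{flat} case: after a rigid motion, an open planar piece $\Gamma\subset\dOS$ lies in $\{x_{3}=0\}$ with $n=e_{3}$ and $\OS$ on one side. From $\psi|_{\Gamma}=0$ the tangential derivatives $\partial_{1}\psi,\partial_{2}\psi$ vanish on $\Gamma$, so there $\nabla\psi_{i}=(\partial_{3}\psi_{i})e_{3}$ and $\Div\psi=\partial_{3}\psi_{3}$; inserting this into $L(\psi)n=qn$ and reading off components gives
\[
\partial_{3}\psi_{1}=\partial_{3}\psi_{2}=0,\qquad(\lambda_{0}+\lambda_{1})\,\partial_{3}\psi_{3}=q\qquad\text{on }\Gamma,
\]
so that $\psi$ and $\partial_{3}\psi$ are \emph{constant} along $\Gamma$. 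Consequently $v:=\partial_{1}\psi$ (and symmetrically $\partial_{2}\psi$) solves $-\Div(L(v))=\mu v$ in $\OS$ near $\Gamma$ and satisfies $v|_{\Gamma}=0$ and $\partial_{3}v|_{\Gamma}=0$, i.e.\ it has \emph{vanishing} Cauchy data on $\Gamma$; by Holmgren plus interior analyticity, $v\equiv0$, and likewise $\partial_{2}\psi\equiv0$. Thus near $\Gamma$ the field $\psi$ depends on $x_{3}$ alone, and solving the resulting constant-coefficient ODE system with the data above forces $\psi=(0,0,A\sin(\sqrt{\mu/(\lambda_{0}+\lambda_{1})}\,x_{3}))$ with amplitude $A\neq0$ (precisely because $q\neq0$). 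By interior real-analyticity this identity persists on all of the connected domain $\OS$, which is incompatible with the homogeneous Dirichlet condition $\psi|_{\dOS}=0$ on the bounded set $\OS$ --- a contradiction. Hence a partially flat domain is good.

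For the \emph{spherical}, \emph{elliptic}, \emph{hyperbolic} and \emph{parabolic} cases the target is the same, but one now exploits that $\Gamma$ is (a piece of) the zero set of a polynomial $Q$ of degree at most two, which is real-analytic. Typical tools are the factorization $\psi=Q\tilde{\psi}$ with $\tilde{\psi}$ analytic near $\Gamma$, together with the hierarchy of constraints obtained by repeatedly differentiating $-\Div(L(\psi))=\mu\psi$ and restricting to $\Gamma$ (using that $\nabla Q$ is affine and $\Delta Q$ constant); and, for the more symmetric geometries, differentiation along the first-order operators that commute with $-\Div(L(\cdot))$ and are tangent to $\Gamma$ (e.g.\ the rotation operators, acting on argument and vector components simultaneously, for spheres and quadrics of revolution; translations along cylindrical or parabolic directions). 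In each of the five geometries these manipulations again yield either an auxiliary Lam\'e solution with vanishing Cauchy data on $\Gamma$ or a direct inconsistency unless $q=0$, and one is led back to $\psi$ having vanishing Cauchy data on $\Gamma$, hence $\psi\equiv0$ by Holmgren and interior analyticity. These case-by-case computations for the corresponding scalar wave model are the substance of \cite{5AT,AT2}; they transfer to the Lam\'e tensor $L$, as recorded in \cite[Remark~1.1]{ATboundary}, because the only PDE input used is Holmgren uniqueness and interior real-analyticity, which are available for the strongly elliptic constant-coefficient operator $-\Div(L(\cdot))$ exactly as for $-\Delta$.

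The main obstacle is precisely this last step in the curved cases: unlike for a flat piece, the unit normal $n$ is not constant along $\Gamma$, so the overdetermined traction condition $L(\psi)n=qn$ does not simply differentiate to zero, and one must carry out the factorization/jet bootstrap (or, alternatively, subtract the explicit radial Bessel-type model solution) with some care in order to land on a genuinely homogeneous Cauchy problem. The flat case above is the clean prototype of the argument; reproducing the curved quadric cases in detail is the technical heart of the Avalos--Triggiani proof and is not repeated here.
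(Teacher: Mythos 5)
Note first that Proposition~\ref{prop:ATgooddomains} is not proved in the paper at all; it is a literature statement that simply records what Avalos and Triggiani established in \cite{5AT,AT2} and transferred to the Lam\'e setting in \cite[Remark~1.1]{ATboundary}. There is no in-paper proof to compare against. What you have produced is a reconstruction of the mechanism behind the cited result, and as such it is largely correct. The reduction to $q\neq0$ is sound: on $\dOS$ the vanishing of $\psi$ forces tangential derivatives to vanish, and inserting $L(\psi)n=0$ then forces $\partial_n\psi=0$ (take the normal component to kill $n\cdot\partial_n\psi$ first), so the full Cauchy data vanish; Holmgren applies because the constant-coefficient Lam\'e operator is strongly elliptic and hence has no real characteristics, and interior analyticity propagates the vanishing. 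The flat case is the right prototype and your computation of the Cauchy data for $v=\partial_j\psi$ on $\Gamma$ is correct. One small elaboration you should add at the end of the flat case: once $\psi$ depends on $x_3$ alone and $\psi_3(x_3)=A\sin(kx_3)$ with $A\ne0$, the incompatibility with $\psi|_{\dOS}=0$ comes from observing that the $x_3$-projection of the bounded domain $\OS$ is an interval $[a,b]$, and for every interior value $x_3\in(a,b)$ the horizontal slice of $\OS$ has nonempty boundary contained in $\dOS$, while $\sin(kx_3)$ vanishes only on a discrete set --- so there exist boundary points where $\psi\neq0$.

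For the spherical/elliptic/hyperbolic/parabolic cases you explicitly stop short, and indeed the treatment there is the real substance of \cite{5AT,AT2}: the normal varies along $\Gamma$, so one cannot simply tangential-differentiate the traction condition to get a homogeneous Cauchy problem, and the quadric-specific factorization or jet bootstrap is genuinely nontrivial. Your sketch flags this honestly, so it is not a hidden gap; but it does mean the proposal, as written, establishes the flat case only and appeals to the cited works for the rest, which is essentially the same posture the Proposition itself takes.
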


\begin{example}
\label{exa:ball}A known \emph{bad} domain is the ball \cite{5AT}.
For $\OS=B_{r}(0)$, examples of non-zero solutions to (\ref{eq:xi_overdet_EP-1})
are given by 
\begin{align*}
\psi_{k}(y) & :=\left(\frac{r^{2}\sin\left(\frac{r_{k}}{r}\vert y\vert\right)}{r_{k}^{2}\vert y\vert^{3}}-\frac{r\cos\left(\frac{r_{k}}{r}\vert y\vert\right)}{r_{k}\vert y\vert^{2}}\right)y,\\
q_{k} & :=(2\lambda_{1}+\lambda_{2})\sin(r_{k}),
\end{align*}
with eigenvalues 
\begin{align*}
\mu_{k}=\frac{(2\lambda_{0}+\lambda_{1})r_{k}^{2}}{r^{2}},
\end{align*}
where $r_{k}\in(0,\infty)$ is the $k$-th positive root of the spherical
Bessel function 
\begin{align*}
j(r)=\frac{\sin(r)}{r^{2}}-\frac{\cos(r)}{r}.
\end{align*}

In Section \ref{sec:Schiffer}, we will further discuss the following 
\end{example}

\begin{conjecture}
\label{conj:Baddomains}The balls $\OS=B_{r}(x_{0})$, for any $r>0,x_{0}\in\mathbb{R}^{3}$,
are the only bad domains. 
\end{conjecture}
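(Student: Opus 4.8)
The plan is to build on the equivalence established in Section~\ref{sec:Schiffer}: $\OS$ is a bad domain if and only if it is a \emph{Schiffer domain}, i.e.\ the overdetermined scalar problem $-\Delta v=\mu_S v$ in $\OS$, $\partial_n v=0$ and $v=c$ on $\dOS$ admits a nontrivial solution. Thus Conjecture~\ref{conj:Baddomains} is exactly the Schiffer conjecture, and I would approach it through the circle of ideas around the Pompeiu problem. As a first reduction one recalls that, for domains with connected boundary, failure of the Pompeiu property is equivalent to solvability of this overdetermined eigenvalue problem, and that by Williams' regularity theorem the boundary of a Schiffer domain is real-analytic; so from the outset one may assume $\dOS$ analytic and $v$ smooth up to $\dOS$.

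The second, decisive step would be to upgrade this analyticity to a rigidity statement forcing $\OS$ to be a ball. The most direct attempt is an Aleksandrov/moving-plane argument in the spirit of Serrin's theorem: for each direction $e$, translate the hyperplane orthogonal to $e$ until the reflected cap first touches $\dOS$, form the difference $w=v-v^{\mathrm{refl}}$, and exploit $\partial_n v=0$ together with $v\equiv c$ on $\dOS$ at the critical boundary point via Hopf's lemma. The obstruction --- and the reason Serrin's argument does not transfer verbatim --- is that a Neumann eigenfunction changes sign, so $w$ has no definite sign and the maximum principle is unavailable in the naive form; one would have to localise the comparison to a nodal component of $v$ or couple it with an integral identity. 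Concretely, I would look for a Weinberger-type auxiliary function adapted to the Helmholtz operator: test a Pohozaev/Rellich vector-field identity for $-\Delta v=\mu_S v$ on $\OS$, using $v\equiv c$ and $\partial_n v=0$ to kill the boundary contributions, and hope that the residual identity pins down $\partial_n\partial_n v$ along $\dOS$, hence the mean curvature, forcing $\dOS$ to be a sphere.

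A third, more structural route --- the one genuinely specific to this paper --- is to use the fluid--elastic semigroup itself rather than scalar PDE rigidity alone. A bad domain carries a time-periodic pressure wave $(\eta,q)$, and by Section~\ref{sec:Invariance} this corresponds to a purely imaginary eigenvalue of $\AsK$ together with a solution of the overdetermined Lam\'e eigenvalue problem~\eqref{eq:xi_overdet_EP-1}. For the ball (Example~\ref{exa:ball}) there is in fact an infinite family of such eigenvalues, indexed by the positive roots of the spherical Bessel function $j$. One would try to show that the existence of a single pressure wave already forces the full Bessel-type spectral structure, and that this structure is incompatible with any geometry other than a sphere; the third-order Neumann condition of Corollary~\ref{cor:3neumannBC} is the natural intermediate target, so the realistic goal is: every Schiffer domain satisfies that third-order condition, and is therefore a ball.

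The main obstacle is that, once the semigroup packaging is removed, this is precisely the Schiffer conjecture, open since the 1950s: the moving-plane method is blocked by the sign change of $v$, $P$-function identities have so far failed to close because the Helmholtz operator lacks the required monotonicity, and neither microlocal analysis of the wave trace on $\OS$ nor quadrature-domain techniques have settled the three-dimensional case. Accordingly, the honest status of Conjecture~\ref{conj:Baddomains} is that it remains a conjecture; what Section~\ref{sec:Schiffer} contributes is the reduction to this classical problem --- together with the sufficient condition in Corollary~\ref{cor:3neumannBC} as partial progress --- rather than a resolution.
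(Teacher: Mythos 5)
You correctly recognize that this is a \emph{conjecture} the paper does not (and cannot currently) prove: its content is exactly that Theorem~\ref{thm:Schiffer} reduces it to the open Schiffer conjecture, with Corollary~\ref{cor:3neumannBC} as the partial progress the paper offers. Your survey of why moving-plane and $P$-function arguments fail goes beyond what the paper says but matches its honest conclusion that the statement remains open.
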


\subsection{Long-time behaviour of the semigroup on good domains}

By characterizing the spectrum of $\As$ on the imaginary axis and
using the criterion of Arendt-Batty and Lyubich-Vu, \cite{ArendtBatty1988,LyubichPhong1988},
Avalos and Triggiani \cite[Theorem 2.3, Theorem 3.2,  Property (6.1)]{8AT2009}
showed the following:
\begin{thm}
\label{thm:spectrum}The point spectrum $\sigma_{p}(\As)$ of $\As$
on the imaginary axis is given exactly by the at most countably many
eigenvalues of the overdetermined problem \eqref{eq:xi_overdet_EP-1},
\[
\sigma_{p}(\As)\cap i\mathbb{R}=\left\{ \pm i\sqrt{1+\mu_{k}}:k\in K\right\} .
\]
Otherwise, there are no purely imaginary spectral points of $\As$,
\[
i\mathbb{R}\cap\left\{ \sigma(\As)\setminus\sigma_{p}(\As)\right\} =\emptyset.
\]
 If $\OS$ is a good domain, then $(\Scs(t))_{t>0}$ is strongly stable
on $\Xs$. 
\end{thm}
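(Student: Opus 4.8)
The plan is to deduce the theorem from two ingredients: a description of $\sigma(\As)\cap i\mathbb{R}$, and the Arendt--Batty--Lyubich--Vu (ABLV) theorem \cite{ArendtBatty1988,LyubichPhong1988}. Once the spectral picture is in hand, the strong-stability statement for good domains follows in one line.

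For the point spectrum I would study $\As(\xi,\zeta,u)^{T}=i\beta(\xi,\zeta,u)^{T}$ with $\beta\in\mathbb{R}$. Eliminating $\zeta=-i\beta\xi$ from the first line reduces this to a stationary Stokes-type system for $(u,p)$, with the pressure recovered as $p=P_{M}(u)+P_{D}(\xi)$ through the mixed Dirichlet--Neumann harmonic extensions so that the transmission conditions on $\dOS$ and the no-slip condition on $\dO$ are encoded, coupled to a stationary Lam\'e problem for $\xi$. Pairing the equation with $(\xi,\zeta,u)$ and taking real parts, the dissipativity of $\AsK$ forces $\nu\int_{\OF}|D(u)|^{2}=0$; with the no-slip condition on $\dO$ and Korn's inequality this gives $u\equiv 0$ and $p$ constant on $\OF$, and the remaining equations become exactly the overdetermined Dirichlet--Lam\'e eigenvalue problem \eqref{eq:xi_overdet_EP-1} with $\mu=\beta^{2}-1$. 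Hence $\sigma_{p}(\As)\cap i\mathbb{R}=\{\pm i\sqrt{1+\mu_{k}}:k\in K\}$. For $i\beta$ outside this set I would show that $i\beta-\As$ is boundedly invertible, by solving the resolvent equation $(i\beta-\As)(\xi,\zeta,u)^{T}=(f,g,h)^{T}$ together with an a priori bound $\|(\xi,\zeta,u)\|_{\Xs}\lesssim\|(f,g,h)\|_{\Xs}$ uniform for $\beta$ near the given value; this removes both continuous and residual spectrum from $i\mathbb{R}$.

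Granting this, the conclusion for good domains is immediate. If $\OS$ is good then $K=\emptyset$, so by the first part $\sigma_{p}(\As)\cap i\mathbb{R}=\emptyset$ and by the second part the rest of the spectrum also avoids $i\mathbb{R}$; thus $\sigma(\As)\cap i\mathbb{R}=\emptyset$, which is in particular countable, and since $\sigma(\As^{*})=\{\bar\lambda:\lambda\in\sigma(\As)\}$ with $i\mathbb{R}$ invariant under conjugation, also $\sigma_{p}(\As^{*})\cap i\mathbb{R}=\emptyset$. The semigroup $(\Scs(t))_{t>0}$ is the restriction of the contraction semigroup $(\ScsK(t))_{t>0}$ to the closed invariant subspace $\Xs$ (Lemma~\ref{lem:Kxi}), hence a bounded $C_{0}$-semigroup on the Hilbert space $\Xs$ generated by $\As$. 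The ABLV theorem therefore applies and yields $\|\Scs(t)x\|_{\Xs}\to 0$ as $t\to\infty$ for every $x\in\Xs$, i.e. $(\Scs(t))_{t>0}$ is strongly stable on $\Xs$.

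The genuinely hard part is the resolvent analysis excluding continuous and residual spectrum from $i\mathbb{R}$: since the elastic displacement $\xi$ gains no regularity, the resolvent of $\As$ is not compact and the Fredholm alternative is unavailable, so the required uniform resolvent bounds must be obtained directly, carefully propagating the pressure through the extensions $P_{M},P_{D}$ and handling the Stokes and Lam\'e parts simultaneously; this is the technical heart of \cite{8AT2009,AT2}. By comparison, identifying the point spectrum and invoking ABLV is routine.
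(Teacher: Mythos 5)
This theorem is not proved in the paper; it is cited verbatim from Avalos and Triggiani \cite{8AT2009} (Theorem 2.3, Theorem 3.2, Property (6.1)), so there is no in-paper argument to compare against. Your reconstruction --- dissipativity plus Korn to force $u\equiv 0$ and reduce the imaginary eigenvalue equation to the overdetermined Dirichlet--Lam\'e problem \eqref{eq:xi_overdet_EP-1} with $\mu=\beta^{2}-1$, a resolvent estimate for the remaining imaginary points (the technical core you correctly identify and defer to \cite{8AT2009,AT2}), and then the Arendt--Batty--Lyubich--Vu criterion for good domains --- matches the strategy of the cited works, which is exactly what the paper invokes.
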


\section{\label{sec:Invariance}Invariance of pressure waves and strong stability
on bad domains}

Here, we want to extend the characterization of the long-time dynamics
of the semigroup to bad domains. Let $\OS$ be a bad domain and recall
the definition 
\[
K=\{k\in\mathbb{N}:\exists(\psi_{k},q_{k})\neq(0,0)\text{ solution of }\eqref{eq:xi_overdet_EP-1}_{\mu_{l}}\}
\]
of the set of indices for which the Dirichlet-Lamé eigenfunctions
provide a non-trivial solution of \eqref{eq:xi_overdet_EP-1}. The
discussion in Section \ref{sec:Schiffer} below will show that $K\neq\emptyset$
may be infinite (if $\OS$ is a ball) or finite (if $\OS$ is a different
bad domain). Let 
\begin{align*}
E & :=\left\{ \xi\in\bar{H}^{1}(\OS):(\xi,\psi_{k})_{H^{1}(\OS)}=0\text{ for all }k\in K\right\} ,\text{ and }\\
\tilde{E} & :=\left\{ \zeta\in L^{2}(\OS):\int_{\OS}\zeta\tilde{\psi}_{k}=0\text{ for all }k\in K\right\} 
\end{align*}
be the spaces of functions perpendicular to the eigenfunctions $\psi_{k},k\in K$
and 

\begin{align*}
W & :=\left\{ \xi\in\bar{H}^{1}(\OS):\xi\in\overline{\mathrm{span}\{\psi_{k}:k\in K\}}^{H^{1}(\OS)}\right\} ,\text{and }\\
\tilde{W} & :=\left\{ \zeta\in L^{2}(\OS):\zeta\in\overline{\mathrm{span}\{\tilde{\psi}_{k}:k\in K\}}^{L^{2}(\OS)}\right\} 
\end{align*}
their spans. For all $\xi\in H^{1}(\OS),\zeta\in L^{2}(\OS)$, and
$k\in K$, define the expressions 
\begin{equation}
\xi_{k}^{H}:=(\xi,\psi_{k})_{H^{1}(\OS)},\qquad\zeta_{k}^{L}:=(\zeta,\tilde{\psi}_{k})_{L^{2}(\OS)},\label{eq:coeffk-1}
\end{equation}
and the projections 
\[
\mathcal{P}_{W}\colon\begin{array}{rcl}
\bar{H}^{1}(\OS) & \to & W\subseteq\bar{H}^{1}(\OS)\\
\xi & \mapsto & \Sigma_{k\in K}\xi_{k}^{H}\psi_{k}
\end{array},
\]
and 
\[
\mathcal{P}_{\tilde{W}}\colon\begin{array}{rcl}
L^{2}(\OS) & \to & \tilde{W}\subseteq L^{2}(\OS)\\
\zeta & \mapsto & \Sigma_{k\in K}\zeta_{k}^{L}\tilde{\psi}_{k}
\end{array},
\]
as well as $\mathcal{P}_{E}:=\mathrm{Id}-\mathcal{P}_{W}$ and $\mathcal{P}_{\tilde{E}}:=\mathrm{Id}-\mathcal{P}_{\tilde{W}}$.
Due to the continuity of the inner products and the orthogonality
of the bases $(\psi_{n}),(\tilde{\psi_{n}})$, $\mathcal{P}_{W},\mathcal{P}_{\tilde{W}}$
and $\mathcal{P}_{E},\mathcal{P}_{\tilde{E}}$ are bounded and $E,\tilde{E}$
and $W,\tilde{W}$ are closed subspaces of $H^{1}(\OS)$ and $L^{2}(\OS)$,
respectively. We set
\[
\Es:=E\times\tilde{E}\times\Ls\subseteq\Xs\qquad\text{and }\Ws:=W\times\tilde{W}\times\{0\}\subseteq\Xs
\]
and define the corresponding bounded projections 
\[
\mathcal{P}_{\Ws}:=(\mathcal{P}_{W},\mathcal{P}_{\tilde{W}},0)\colon\Xs\to\Ws;\qquad\mathcal{P}_{\Es}:=(\mathcal{P}_{E},\mathcal{P}_{\tilde{E}},\mathrm{Id}_{L})\colon\Xs\to\Es.
\]
We have thus obtained the orthogonal decomposition 
\begin{equation}
\Es\oplus\Ws=\Xs.\label{eq:decomp}
\end{equation}

\begin{lem}
\label{lem:invariance}The projections $\mathcal{P}_{\Ws},\mathcal{P}_{\Es}$
commute with the semigroup $(\Scs(t))_{t>0}$. In particular, the
spaces $\Es$ and $\Ws$ are invariant. 
\end{lem}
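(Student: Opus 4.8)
The plan is to realise $\Ws$ as the closed span of a family of finite-dimensional $\AsK$-invariant subspaces on which the generator acts by rotation, to deduce invariance of $\Ws$ from boundedness of the semigroup, and then to obtain invariance of the orthogonal complement $\Es$ from a general Hilbert-space argument using that $\Scs(t)$ is unitary on $\Ws$.

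First I would fix $k\in K$, let $(\psi_{k},q_{k})$ solve the overdetermined problem \eqref{eq:xi_overdet_EP-1} with eigenvalue $\mu_{k}$ (so $\psi_{k}\in H_{0}^{1}(\OS)$ is smooth and normalised in the inner product \eqref{eq:innerproduct}, and $\tilde{\psi}_{k}=\sqrt{1+\mu_{k}}\,\psi_{k}$), and consider the two vectors $e_{1}^{k}:=(\psi_{k},0,0)$ and $e_{2}^{k}:=(0,\tilde{\psi}_{k},0)$. They lie in $\Xs$ because $\psi_{k}$ has zero trace on $\dOS$, hence $K_{\psi_{k}}=0$. I would then check that $e_{1}^{k},e_{2}^{k}\in D(\AsK)$ by verifying the conditions (A.1)--(A.5) of \cite{AT2}: the $H^{1}$-regularity and $\Div L\in L^{2}$ are immediate; the compatibility $u|_{\dOS}=\zeta|_{\dOS}$ and $u|_{\partial\Omega}=0$ holds trivially since $u=0$ and $\zeta|_{\dOS}=0$; and for the stress condition $S(u,p)n=L(\xi)n$ on $\dOS$ one uses that the pressure associated with $e_{1}^{k}$ is $p=P_{D}(\psi_{k})$, which by the mixed Dirichlet--Neumann problem defining $P_{D}$ equals the \emph{constant} $-n^{T}L(\psi_{k})n|_{\dOS}=-q_{k}$, so $P_{D}(\psi_{k})\equiv-q_{k}$ on $\OF$, $\nabla P_{D}(\psi_{k})=0$, and $S(0,-q_{k})n=q_{k}n=L(\psi_{k})n$ on $\dOS$; for $e_{2}^{k}$ the pressure vanishes and everything is trivial. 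Plugging into the explicit form of $\AsK$ and using $-\Div L(\psi_{k})=\mu_{k}\psi_{k}$ then gives
\[
\AsK e_{1}^{k}=\sqrt{1+\mu_{k}}\,e_{2}^{k},\qquad\AsK e_{2}^{k}=-\sqrt{1+\mu_{k}}\,e_{1}^{k},
\]
so $V_{k}:=\Span\{e_{1}^{k},e_{2}^{k}\}\subseteq D(\AsK)$ is $\AsK$-invariant, and on the orthonormal pair $e_{1}^{k},e_{2}^{k}$ (orthonormality is exactly the $H^{1}$-normalisation of $\psi_{k}$ and the $L^{2}$-normalisation of $\tilde{\psi}_{k}$) the generator is the skew-symmetric rotation generator with spectrum $\{\pm i\sqrt{1+\mu_{k}}\}$. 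Hence $\Scs(t)$ maps $V_{k}$ onto itself, acting there as the orthogonal rotation by angle $\sqrt{1+\mu_{k}}\,t$.

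Next, $\{e_{1}^{k},e_{2}^{k}:k\in K\}$ is an orthonormal system in $\Xs$ — orthogonality across distinct $k$ follows from the orthogonality of $(\psi_{n})$ in $H_{0}^{1}(\OS)$ and of $(\tilde{\psi}_{n})$ in $L^{2}(\OS)$ — and $\Ws=\overline{\Span\{V_{k}:k\in K\}}$, $\Es=\Ws^{\perp}$ by \eqref{eq:decomp}. Since $\Scs(t)$ is bounded and preserves the algebraic span $\Span\{V_{k}\}$, it preserves $\Ws$; since it is an invertible isometry on each $V_{k}$ and the $V_{k}$ are mutually orthogonal, $\Scs(t)$ restricts to a surjective linear isometry of $\Ws$. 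For $\Es=\Ws^{\perp}$ I would invoke the elementary fact that for a contraction $T$ on a Hilbert space, if a closed subspace $\mathcal{M}$ is $T$-invariant and $T|_{\mathcal{M}}$ is a surjective isometry, then $\mathcal{M}^{\perp}$ is $T$-invariant: applying the contraction inequality to $x\pm\lambda m'$ with $x\in\mathcal{M}^{\perp}$, $m'\in\mathcal{M}$ and $\lambda$ ranging over $\mathbb{R}$ and $i\mathbb{R}$ (and using $\|Tm'\|=\|m'\|$) forces $\langle Tx,Tm'\rangle=0$, and surjectivity of $T|_{\mathcal{M}}$ turns this into $Tx\perp\mathcal{M}$. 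With $T=\Scs(t)$ and $\mathcal{M}=\Ws$ this gives invariance of $\Es$. Finally, since $\Xs=\Ws\oplus\Es$ with both summands $\Scs(t)$-invariant and $\mathcal{P}_{\Ws},\mathcal{P}_{\Es}$ the associated orthogonal projections, the identities $\mathcal{P}_{\Ws}\Scs(t)=\Scs(t)\mathcal{P}_{\Ws}$ and $\mathcal{P}_{\Es}\Scs(t)=\Scs(t)\mathcal{P}_{\Es}$ are immediate by splitting an arbitrary element of $\Xs$.

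The step requiring the most care is the first: confirming that $e_{1}^{k}$ and $e_{2}^{k}$ really belong to $D(\AsK)$, in particular that the boundary stress condition matches via the identification of the pressure with the \emph{constant} $P_{D}(\psi_{k})\equiv-q_{k}$, which is precisely what makes $\nabla P_{D}(\psi_{k})$ drop out of $\AsK e_{1}^{k}$. Everything after that is the explicit $2\times2$ computation with $\AsK$ and routine functional-analytic bookkeeping (closure/density for $\Ws$, boundedness of $\Scs(t)$, and the contraction lemma).
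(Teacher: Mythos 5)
Your proof is correct, and it takes a genuinely different route from the paper's. The paper argues at the level of mild solutions: it takes the integrated equation, pairs the first two components with $\psi_{k}$ (in $H^{1}$) and $\tilde{\psi}_{k}$ (in $L^{2}$), uses integration by parts together with the boundary data of the overdetermined problem and the invariance $\zeta^{*}(t)\in\bar H^{1}(\OS)$ to close a rotation ODE for the coefficients $(\xi_{k}^{H},\zeta_{k}^{L})$, identifies that ODE with the one solved by the coefficients of a pressure wave starting from $\mathcal{P}_{\Ws}\mathbf{x}_{0}$, and thus obtains the commutation $\mathcal{P}_{\Ws}\Scs(t)=\Scs(t)\mathcal{P}_{\Ws}$ directly; the $\Es$-commutation then follows for free from $\mathcal{P}_{\Es}=\mathrm{Id}-\mathcal{P}_{\Ws}$. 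You instead work at the level of the generator: you verify $e_{1}^{k}=(\psi_{k},0,0)$, $e_{2}^{k}=(0,\tilde{\psi}_{k},0)\in D(\AsK)$ (the key being that $P_{D}(\psi_{k})$ is the constant $-q_{k}$, so $\nabla P_{D}=0$ and the stress condition matches), show $\AsK$ acts on $V_{k}=\Span\{e_{1}^{k},e_{2}^{k}\}$ as the skew rotation, deduce $\Scs(t)$-invariance of each $V_{k}$ and hence of $\Ws$ by density and boundedness, then obtain invariance of $\Es=\Ws^{\perp}$ from a contraction-plus-surjective-isometry lemma, and finally read off commutation of both projections. Both arguments are essentially the same rotation computation, but yours is more operator-theoretic (it sidesteps the weak-formulation integration by parts entirely, at the cost of invoking an abstract lemma for the orthogonal complement that the paper avoids, since the paper proves commutation for $\mathcal{P}_{\Ws}$ outright rather than first proving only invariance of $\Ws$). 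One small note: $\Xs$ is a real Hilbert space, so in your contraction lemma it suffices to let $\lambda$ range over $\mathbb{R}$; the $i\mathbb{R}$ variant is only needed after complexification.
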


\begin{proof}
For all $(\xi_{0},\xi_{1},u_{0})\in\Xs$, the map $t\mapsto\Scs(t)(\xi_{0},\xi_{1},u_{0})$
gives a mild solution of \eqref{eq:linsys}, in the sense that for
all $t\in(0,T)$,
\begin{equation}
\int_{0}^{t}\left(\begin{array}{c}
\xi(s)\\
\zeta(s)\\
u(s)
\end{array}\right)\,\mathrm{d}s\in D(\As),\quad\label{eq:mildeq}
\end{equation}
and 
\begin{equation}
\left(\begin{array}{c}
\xi(t)\\
\zeta(t)\\
u(t)
\end{array}\right)=\left(\begin{array}{c}
\xi_{0}\\
\xi_{1}\\
u_{0}
\end{array}\right)+\As\int_{0}^{t}\left(\begin{array}{c}
\xi(s)\\
\zeta(s)\\
u(s)
\end{array}\right)\,\mathrm{d}s.\label{eq:mildeq2}
\end{equation}
For all $k\in K$, take the $H^{1}$- and $L^{2}$- inner products
with $\psi_{k},\tilde{\psi}_{k}$ in the first two equations. We can
use that 
\begin{equation}
\xi^{*}(t):=\int_{0}^{t}\xi(s)\,\mathrm{d}s\in\bar{H}^{1}(\OS),\quad\zeta^{*}(t):=\int_{0}^{t}\zeta(s)\,\mathrm{d}s\in\bar{H}^{1}(\OS),\label{eq:invarianceK}
\end{equation}
where the first inclusion is just the invariance from Lemma \ref{lem:Kxi},
and the second inclusion is due to the corresponding property $R(\As)|_{\zeta}\subseteq\bar{H}^{1}(\OS)$,
shown in \cite[(2.15)]{AT2}. Due to $\xi^{*}(t)\in D(\As)|_{\xi}$,
by \eqref{eq:DA2}, $\Div\sigma(\xi^{*})(t)\in L^{2}(\OS)$ and $\sigma(\xi^{*}(t))n\in H^{-1/2}(\dOS)$
are well-defined. We have 
\[
\int_{0}^{t}\xi_{k}^{H}(s)\,\mathrm{d}s=(\xi^{*}(t),\psi_{k})_{H^{1}(\OS)}
\]
and $\int_{0}^{t}\zeta_{k}^{L}(s)\,\mathrm{d}s=(\zeta^{*}(t),\tilde{\psi}_{k})_{L^{2}(\OS)}$
for all $t\geq0$. Due to these considerations, due to \eqref{eq:xi_overdet_EP-1},
by the Gauss Theorem and using \eqref{eq:invarianceK}, we obtain
\begin{align}
\xi_{k}^{H}(t) & =\xi_{0,k}^{H}+(\zeta^{*}(t),\psi_{k})_{H^{1}(\OS)}\label{eq:odeXik}\\
 & =\xi_{0,k}^{H}-\int_{\OS}\zeta^{*}(t)\cdot\Div L(\psi_{k})+q_{K}\int_{\dOS}\zeta^{*}(t)\cdot n+(\zeta^{*}(t),\psi_{k})_{L^{2}(\OS)}\nonumber \\
 & =\xi_{0,k}^{H}+\sqrt{1+\mu_{k}}(\zeta^{*}(t),\tilde{\psi}_{k})_{L^{2}(\OS)}=\xi_{0,k}^{H}+\sqrt{1+\mu_{k}}\int_{0}^{t}\zeta_{k}^{L}(s)\,\mathrm{d}s,\nonumber 
\end{align}
and 
\begin{align}
\zeta_{k}^{L}(t) & =\xi_{1,k}^{L}+(\Div L(\xi^{*}(t)),\tilde{\psi}_{k})_{L^{2}(\OS)}-(\xi^{*}(t),\tilde{\psi}_{k})_{L^{2}(\OS)}\label{eq:odezetak}\\
 & =\xi_{1,k}^{L}-\int_{\OS}L(\xi^{*}(t)):\nabla\tilde{\psi}_{k}-\int_{0}^{t}\xi_{k}^{L}(s)\,\mathrm{d}s=\xi_{1,k}^{L}-\sqrt{1+\mu_{k}}\int_{0}^{t}\xi_{k}^{H}(s)\,\mathrm{d}s.\nonumber 
\end{align}
The unique solution of this integrated system of ODEs is given by
\[
\left(\begin{array}{c}
\xi_{k}^{H}(t)\\
\zeta_{k}^{L}(t)
\end{array}\right)=e^{t\sqrt{1+\mu_{k}}\left(\begin{array}{cc}
0 & 1\\
-1 & 0
\end{array}\right)}\left(\begin{array}{c}
\xi_{0,k}^{H}\\
\xi_{1,k}^{L}
\end{array}\right).
\]
This is also exactly the system solved by the coefficients of any
pressure wave $\eta(t)=\Sigma_{k\in K}\xi_{1,k}^{L}\sin(\sqrt{1+\mu_{k}}t)+\xi_{0,k}^{H}\cos(\sqrt{1+\mu_{k}}t)\psi_{k}$
given as in the characterization \eqref{eq:charomega}, eminating
from initial data $\eta_{0}=\Sigma_{k\in K}\xi_{0,k}^{H}\psi_{k}\in H_{0}^{1}(\OS),\eta_{1}=\Sigma_{k}\xi_{1,k}^{L}\tilde{\psi}_{k}\in L^{2}(\OS)$.
Adding a fluid at rest to the pressure wave, $u(t)=0$, provides the
unique mild solution of \eqref{eq:linsys}. This shows that for any
initial data $\mathbf{x}_{0}=(\xi_{0},\xi_{1},u_{0})\in\Xs$, 
\[
\mathcal{P}_{\Ws}\left(\Scs(t)\mathbf{x}_{0}\right)=\Scs(t)\left(\mathcal{P}_{\Ws}\mathbf{x}_{0}\right).
\]
By linearity and using the decomposition $\Xs=\Es\oplus\Ws$, this
also implies 
\[
\mathcal{P}_{\Es}\left(\Scs(t)\mathbf{x}_{0}\right)=\Scs(t)\left(\mathcal{P}_{\Es}\mathbf{x}_{0}+\mathcal{P}_{\Ws}\mathbf{x}_{0}\right)-\mathcal{P}_{\Ws}\left(\Scs(t)\mathbf{x}_{0}\right)=\Scs(t)\left(\mathcal{P}_{\Es}\mathbf{x}_{0}\right).
\]
The invariances $\Scs(t)(\Es)\subseteq\Es$ and $\Scs(t)(\Ws)\subseteq\Ws$
are a direct consequence. 
\end{proof}
Based on this decomposition lemma, the following main result characterizes
the long-time behaviour of the fluid-elastic semigroup.
\begin{thm}
\label{thm:strongstab}For all Lipschitz domains $\OS$, for all initial
values $\mathbf{x}_{0}=(\xi_{0},\xi_{1},u)\in\XsK$, there is a unique
decomposition 
\[
\left(\begin{array}{c}
\xi_{0}\\
\xi_{1}\\
u_{0}
\end{array}\right)=\left(\begin{array}{c}
\kappa_{0}\varphi\\
0\\
0
\end{array}\right)+\left(\begin{array}{c}
\xi_{0}^{E}\\
\xi_{1}^{\tilde{E}}\\
u_{0}
\end{array}\right)+\left(\begin{array}{c}
\eta_{0}\\
\eta_{1}\\
0
\end{array}\right),
\]
where $\kappa_{0}\in\mathbb{R}$,$\varphi\in H^{1}(\Omega)$ are given
by Lemma \ref{lem:Kxi}, $(\xi_{0}^{E},\xi_{1}^{\tilde{E}},u_{0})\in\Es$
and $(\eta_{0},\eta_{1},0)\in\Ws$. Correspondingly, the mild solution
of \eqref{eq:linsys} decomposes into 
\begin{equation}
\left(\begin{array}{c}
\xi\\
\dot{\xi}\\
u
\end{array}\right)(t)=\ScsK(t)\left(\begin{array}{c}
\xi_{0}\\
\xi_{1}\\
u_{0}
\end{array}\right)=\left(\begin{array}{c}
\kappa_{0}\varphi\\
0\\
0
\end{array}\right)+\Scs(t)|_{\Es}\left(\begin{array}{c}
\xi_{0}^{E}\\
\xi_{1}^{\tilde{E}}\\
u_{0}
\end{array}\right)+\Scs(t)|_{\Ws}\left(\begin{array}{c}
\eta_{0}\\
\eta_{1}\\
0
\end{array}\right),\label{eq:decosol}
\end{equation}
where $(\ScsK(t))_{t>0}$ is a strongly continuous semigroup of contractions
on $\XsK$, and the spaces $\Es$ and $\Ws$ are invariant under the
restriction $(\Scs(t)|_{\Xs})_{t>0}$ of the semigroup to $\Xs$.
The restricition $(\Scs(t)|_{\Es})_{t>0}$ of the semigroup to $\Es$
is strongly stable. In particular 
\[
\Vert u(t)\Vert_{L^{2}(\OS)}\overset{t\to\infty}{\to}0,\qquad\Vert\xi(t)-\kappa_{0}\varphi-\eta(t)\Vert_{H^{1}(\OS)}\overset{t\to\infty}{\to}0,
\]
where $\eta$is the unique pressure wave solution of \eqref{eq:xi_system_A}
eminating from initial data $\eta(0)=\eta_{0},\dot{\eta}(0)=\eta_{1}$.
If $\OS$ is a good domain, then $\Es=\Xs$ and thus $\eta_{0}=\eta_{1}=0$. 
\end{thm}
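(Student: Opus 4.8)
The plan is to deduce the statement by combining the kernel splitting of Lemma~\ref{lem:Kxi}, the invariant decomposition of Lemma~\ref{lem:invariance} and the spectral information of Theorem~\ref{thm:spectrum}, the one genuinely new ingredient being a second application of the Arendt--Batty--Lyubich--Vu criterion, this time on the subspace $\Es$. For the decomposition of the initial data, recall from \eqref{eq:Kxigone} that every $\xi_0\in H^1(\OS)$ is uniquely $\xi_0=\kappa_0\varphi+\xi_0'$ with $\kappa_0=K_{\xi_0}/\Vert\varphi\Vert_{H^1(\OS)}^2$ and $\xi_0'\in\bar H^1(\OS)$, so that $(\kappa_0\varphi,0,0)\in\ker\AsK$ and $(\xi_0',\xi_1,u_0)\in\Xs$; the identity $(\varphi,\xi)_{H^1(\OS)}=K_\xi$, obtained by the same integration by parts as in the proof of Lemma~\ref{lem:Kxi}, shows in addition that $\ker\AsK\perp\Xs$ in $\XsK$. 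I would then apply the orthogonal decomposition $\Xs=\Es\oplus\Ws$ of \eqref{eq:decomp} to $(\xi_0',\xi_1,u_0)$ and set $(\xi_0^E,\xi_1^{\tilde E},u_0):=\mathcal{P}_{\Es}(\xi_0',\xi_1,u_0)$ and $(\eta_0,\eta_1,0):=\mathcal{P}_{\Ws}(\xi_0',\xi_1,u_0)$. Since $\ker\AsK$, $\Es$ and $\Ws$ are mutually orthogonal closed subspaces of $\XsK$, the resulting three-term decomposition of $\mathbf{x}_0$ is unique.

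Next I would establish the decomposition \eqref{eq:decosol} of the solution. Since $(\kappa_0\varphi,0,0)\in\ker\AsK$, the constant curve $t\mapsto(\kappa_0\varphi,0,0)$ solves $\dot{\mathbf{y}}=\AsK\mathbf{y}$, so by uniqueness $\ScsK(t)(\kappa_0\varphi,0,0)=(\kappa_0\varphi,0,0)$ for all $t$. On $\Xs$ the semigroup restricts to $(\Scs(t))$, which by Lemma~\ref{lem:invariance} commutes with $\mathcal{P}_{\Es},\mathcal{P}_{\Ws}$ and leaves $\Es,\Ws$ invariant; hence $\ScsK(t)(\xi_0',\xi_1,u_0)=\Scs(t)|_{\Es}(\xi_0^E,\xi_1^{\tilde E},u_0)+\Scs(t)|_{\Ws}(\eta_0,\eta_1,0)$, which is \eqref{eq:decosol}. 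The computation in the proof of Lemma~\ref{lem:invariance} moreover identifies $\Scs(t)|_{\Ws}(\eta_0,\eta_1,0)=(\eta(t),\dot\eta(t),0)$, where $\eta(t)=\sum_{k\in K}\bigl(\eta_{1,k}\sin(\sqrt{1+\mu_k}\,t)+\eta_{0,k}\cos(\sqrt{1+\mu_k}\,t)\bigr)\psi_k$ with $\eta_{0,k},\eta_{1,k}$ the coefficients of $\eta_0,\eta_1$ along $(\psi_k),(\tilde\psi_k)$; by the characterization \eqref{eq:charomega} this is precisely the unique pressure wave with $(\eta(0),\dot\eta(0))=(\eta_0,\eta_1)$.

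The heart of the proof is to show that $(\Scs(t)|_{\Es})$ is strongly stable. It is a contraction $C_0$-semigroup on the Hilbert space $\Es$ with generator the part $\As|_{\Es}$ of $\As$ in $\Es$, and I would apply the Arendt--Batty--Lyubich--Vu theorem \cite{ArendtBatty1988,LyubichPhong1988}. Because $\mathcal{P}_{\Es}$ commutes with the resolvent of $\As$, one has $\sigma(\As|_{\Es})\subseteq\sigma(\As)$ and $\sigma_p(\As|_{\Es})\subseteq\sigma_p(\As)$, so by Theorem~\ref{thm:spectrum} the set $\sigma(\As|_{\Es})\cap i\mathbb{R}$ is at most countable, contained in $\{\pm i\sqrt{1+\mu_k}:k\in K\}$. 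The main obstacle is to rule out purely imaginary eigenvalues of $\As|_{\Es}$, and for this I would show that every eigenvector of (the complexification of) $\As$ with purely imaginary eigenvalue lies in $\Ws_{\mathbb C}$: from $\As(\xi,\zeta,u)=i\omega(\xi,\zeta,u)$ with $\omega\ne0$ one reads $\zeta=-i\omega\xi$; the dissipation identity $\operatorname{Re}(\As\mathbf{x},\mathbf{x})_{\XsK}=-2\nu\Vert D(u)\Vert_{L^2(\OF)}^2$ forces $D(u)=0$ in $\OF$, so that $u$ is an infinitesimal rigid motion there and, using $u|_{\partial\Omega}=0$ (a point handled within the spectral analysis behind Theorem~\ref{thm:spectrum}), $u\equiv0$; then $u|_{\dOS}=\zeta|_{\dOS}=0$ yields $\xi\in H_0^1(\OS)$ with $-\Div L(\xi)=(\omega^2-1)\xi$, the fluid momentum equation (now with $u=0$ and $P_M$ vanishing) collapses to $\nabla p=0$ so that $p\equiv q$ is constant, and the transmission condition $S(u,p)n=L(\xi)n$ becomes $L(\xi)n=-qn$ on $\dOS$; thus $\xi$ solves the overdetermined eigenvalue problem \eqref{eq:xi_overdet_EP-1} at $\mu=\omega^2-1$, whence $\xi\in W_{\mathbb C}$, $\zeta=-i\omega\xi\in\tilde W_{\mathbb C}$ and $(\xi,\zeta,u)\in\Ws_{\mathbb C}$ --- essentially the computation of Avalos and Triggiani underlying Theorem~\ref{thm:spectrum}. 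As $\Es_{\mathbb C}\perp\Ws_{\mathbb C}$, no nonzero vector of $\Es_{\mathbb C}$ is such an eigenvector, so $\sigma_p(\As|_{\Es})\cap i\mathbb{R}=\emptyset$; and since $(\Scs(t)|_{\Es})$ is a contraction semigroup on a Hilbert space, this is equivalent to $\sigma_p((\As|_{\Es})^*)\cap i\mathbb{R}=\emptyset$ (an imaginary eigenvector of the generator being an imaginary eigenvector of its Hilbert adjoint). The criterion then yields $\Scs(t)|_{\Es}\mathbf{x}\to0$ for every $\mathbf{x}\in\Es$.

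It remains to read off the stated limits from \eqref{eq:decosol}: the $\ker\AsK$- and $\Ws$-terms carry zero fluid component, so $u(t)$ equals the fluid part of $\Scs(t)|_{\Es}(\xi_0^E,\xi_1^{\tilde E},u_0)$ and $\Vert u(t)\Vert_{L^2(\OF)}\to0$, while $\xi(t)-\kappa_0\varphi-\eta(t)$ equals its first component and $\Vert\xi(t)-\kappa_0\varphi-\eta(t)\Vert_{H^1(\OS)}\to0$. Finally, if $\OS$ is good then $K=\emptyset$, whence $W=\tilde W=\{0\}$, $\Ws=\{0\}$, $\Es=\Xs$ and $\eta_0=\eta_1=0$. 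Throughout, the only genuinely delicate step is the eigenvector computation of the previous paragraph; everything else is bookkeeping built on Lemmas~\ref{lem:Kxi} and~\ref{lem:invariance} and Theorem~\ref{thm:spectrum}.
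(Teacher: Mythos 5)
Your proof is correct and follows essentially the same strategy as the paper's: decompose via Lemma~\ref{lem:Kxi} and \eqref{eq:decomp}, use Lemma~\ref{lem:invariance} to split the solution into a kernel part, an $\Es$-part and a $\Ws$-pressure-wave part, and then apply the Arendt--Batty--Lyubich--Vu criterion on $\Es$ by feeding in the spectral information of Theorem~\ref{thm:spectrum}. The only differences are cosmetic: you re-derive from scratch that purely imaginary eigenvectors of $\As$ lie in $\Ws_{\mathbb C}$ (essentially reproving the computation already encapsulated in Theorem~\ref{thm:spectrum}, which the paper simply cites together with $w\notin\Es$ for $0\neq w\in\Ws$), and you obtain countability of $\sigma(\As|_{\Es})\cap i\mathbb R$ directly from the fact that a projection commuting with the semigroup commutes with the resolvent, so $\sigma(\As|_{\Es})\subseteq\sigma(\As)$, whereas the paper reaches the same conclusion via \cite[Corollary~2.16]{EngelNagel2000} and a topological argument about the unbounded component $\rho_+(\As)$ of the resolvent set; both routes are valid, yours being marginally more self-contained.
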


\begin{proof}
The decomposition of initial data is due to Lemma \ref{lem:Kxi} and
\eqref{eq:decomp}. The fact that $(\ScsK(t))_{t>0}$ is a strongly
continuous semigroup of contractions generated by $\AsK$ was shown
in \cite{5AT,4AT2009}.The decomposition \eqref{eq:decosol} of the
solution follows from Lemmas \ref{lem:Kxi} and \ref{lem:invariance}.
It remains to prove the strong stability of $(\Scs(t)|_{\Es})_{t>0}$.
This is obtained from the Arendt-Batty and Lyubich-Vu criterion \cite{ArendtBatty1988,LyubichPhong1988}:
it is sufficient that no eigenvalue of $\As|_{\Es}:D(\As)\cap\Es\to\Es$
lies on the imaginary axis and that the spectrum $i\mathbb{R}\cap\sigma(\As|_{\Es})$
of $\As|_{\Es}$ on the imaginary axis is countable. Both statements
can be obtained as consequences of the characterization of the spectrum
of $\As$ on $\Xs$ in Theorem \ref{thm:spectrum}. The imaginary
number $i\beta$ is an eigenvalue of $\As$ (on $\Xs$) if and only
if $\beta=\pm\sqrt{1+\mu_{n}}$ for some $k\in K$, $\mu_{k}$ an
eigenvalue of the overdetermined problem \eqref{eq:xi_overdet_EP-1}.
In this case, the corresponding solutions $w_{k}:=(\psi_{k},i\mu_{k}\psi_{k},0)$
span the eigenspace of $\As$, which is thus contained in $\Ws$.
But $w\notin\Es$ for all $0\neq w\in\Ws$, so there is no eigenvalue
of $\As|_{\Es}$ on the imaginary axis. Since $\As$ generates a semigroup
of contractions, its spectrum is contained in the non-positive left
half-plane, so every $\mu\in i\mathbb{R}$ lies in the closure of
the connected component of the resolvent of $\As$ that opens up to
the right, $\rho_{+}(\As)$, of $\As$. By \cite[Corollary 2.16]{EngelNagel2000},
for all $\mu\in\rho_{+}(\As)$, if $\mu\in\sigma(\As|_{\Es})$ then
$\mu\in\sigma(\As)$. By Theorem \ref{thm:spectrum}, $i\mathbb{R}\cap\sigma(A)$
is countable, so $i\mathbb{R}\cap\sigma(\As|_{\Es})$ must be countable
as well and the claim is proved. 
\end{proof}

\section{\label{sec:Schiffer}Bad domains are Schiffer domains and vice versa}

The aim of this section is to obtain a new characterization of bad
domains. In fact, we show that a sufficiently regular non-trivial
solution $\eta$ of \eqref{eq:xi_overdet_EP-1} gives us a non-trivial
solution of the scalar overdetermined Neumann problem
\begin{equation}
\begin{split}\begin{cases}
\begin{array}{rcll}
-\Delta u & = & \mu_{S}u, & \text{in }\OS,\\
\partial_{n}u & = & 0, & \text{on }\dOS,\\
u & = & c, & \text{on }\dOS.
\end{array}\end{cases}\end{split}
\label{eq:schifferProblem}
\end{equation}
The following statement is known as the Schiffer conjecture:
\begin{conjecture*}
\label{claim:schiffer}If \eqref{eq:schifferProblem} admits a non-trivial
solution and $\OS$ is a bounded Lipschitz domain, then $\OS$ is
a ball. 
\end{conjecture*}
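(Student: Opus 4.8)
The statement to be proved is the Schiffer conjecture, which is a long-standing open problem; the following is therefore a plan of attack together with an honest account of where it stalls. The natural route is to couple the reduction of \eqref{eq:schifferProblem} to the Pompeiu problem with a Fourier-analytic reformulation and then try to prove the resulting rigidity statement. By the work of Williams (and its Neumann counterpart due to Berenstein--Yang and others), the overdetermined problem \eqref{eq:schifferProblem} has a non-trivial solution exactly when the bounded Lipschitz domain $\OS$ fails to have the Pompeiu property, i.e.\ when some non-zero $f\in L^1_{\mathrm{loc}}(\mathbb{R}^3)$ has vanishing integral over every rigid-motion image of $\OS$; taking Fourier transforms, this is equivalent to $\widehat{\chi_{\OS}}$ vanishing identically on some sphere $\{\xi\in\mathbb{R}^3:|\xi|=\alpha\}$ of positive radius $\alpha$. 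Combined with the equivalence ``bad domain $\Leftrightarrow$ Schiffer domain'' established in this section (linking \eqref{eq:xi_overdet_EP-1} to \eqref{eq:schifferProblem}), the conjecture is thus reduced to the purely analytic rigidity claim: if $\widehat{\chi_{\OS}}$ annihilates a sphere of positive radius, then $\OS$ is a ball.

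The first substantive step is a regularity bootstrap: from the overdetermined triple $(-\Delta u=\mu_S u$, $\partial_n u=0$, $u=c)$ one upgrades $\dOS$ to a smooth and then real-analytic hypersurface, which also permits differentiating the two boundary relations tangentially and normally to arbitrary order. Inserting these into the equation, together with Rellich--Pohozaev type identities, produces an infinite hierarchy of scalar conditions on the boundary geometry of $\OS$; the first non-trivial member of this hierarchy is the third-order Neumann condition recorded in Corollary \ref{cor:3neumannBC}, which already forces $\OS$ to be a ball under one extra assumption. The target is to show that the full hierarchy, with no extra assumption, forces $\dOS$ to have constant mean curvature, after which Alexandrov's soap-bubble theorem closes the argument. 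Known partial confirmations — Proposition \ref{prop:ATgooddomains} for the quadric cases, and perturbative results of Berenstein/Aviles type for domains $C^{\infty}$-close to a ball — fit into this picture and could in principle be combined, but they do not exhaust the smooth domains.

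A second route, which one would pursue in parallel, is a symmetrization argument in the spirit of Serrin's moving-plane method: reflect $\OS$ across a hyperplane, compare $u$ with its reflection on the overlap, and slide the plane to force mirror symmetry in every direction, hence sphericity. \textbf{This is precisely where the main obstacle lies.} Unlike Serrin's torsion problem, \eqref{eq:schifferProblem} is an \emph{eigenvalue} problem, so $u$ is sign-changing and neither $u$ nor the relevant comparison functions obey a maximum principle; the moving-plane machinery therefore has no traction. For the same structural reason the Fourier route resists the usual tools: one is reduced to proving that the \emph{real} zero set of a Paley--Wiener function of exponential type cannot contain an entire sphere unless the function is radial, and no technique is known that controls such ``spherical'' zero sets in full generality. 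This single gap is what keeps the conjecture open. What the present work contributes is to make the gap concrete inside fluid-elastic interaction, identifying it with the geometric characterization of \emph{bad domains}, so that any future resolution of the Schiffer (equivalently Pompeiu) problem immediately yields, via Theorem \ref{thm:strongstab}, a sharp dichotomy between strong stability and persistent pressure waves.
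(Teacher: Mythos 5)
You have correctly recognized that the statement is the Schiffer conjecture itself, which the paper explicitly presents as an open problem in geometric analysis and does not prove: it appears in a \verb|conjecture*| environment with no proof, and the paper's actual contributions in this section are the equivalence ``bad domain $\Leftrightarrow$ Schiffer domain'' (Theorem \ref{thm:Schiffer}) and the sufficient third-order Neumann condition (Corollary \ref{cor:3neumannBC}). So there is no proof in the paper to compare against, and your submission is, by your own admission, not a proof either. The genuine gap is the entire conjecture.

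That said, your account of the state of the art is accurate and consistent with the references the paper cites: the equivalence to the Pompeiu problem and its Fourier reformulation via the vanishing of $\widehat{\chi_{\Omega_S}}$ on a sphere (Williams, Zalcman's survey), the analyticity of the boundary of a Schiffer domain, the Berenstein--Yang finiteness result, and the failure of moving-plane arguments because the eigenfunction changes sign. The one caution I would add is that your ``infinite hierarchy of boundary conditions forces constant mean curvature'' route is aspirational rather than a program with a known endpoint: Corollary \ref{cor:3neumannBC} shows that \emph{one} additional overdetermination (a constant third normal derivative) yields constant mean curvature and hence a ball by Alexandrov, but nothing in the hierarchy obtained by further differentiation is known to produce that extra condition for free. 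If you intend to present this as a contribution rather than a survey, you must either supply that missing implication or clearly label the statement as the open conjecture it is, as the paper does.
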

We call $\OS$ a \emph{Schiffer domain}, if \eqref{eq:schifferProblem}
admits a solution. The proof of Claim \ref{claim:schiffer} is an
open problem in geometric analysis. Partial results are known. We
focus here on the smooth three-dimensional case and refer to \cite{KawohlLucia2020Schiffer}
and references therein for the two-dimensional case. In any dimension,
the Schiffer Conjecture is essentially equivalent to the Pompeiu problem
\cite{Williams1976SchifferEqtoPompeiu}, which is surveyed in \cite{zalcman1992}.
It is well-known that if $c=0$, then there is only the trivial solution
$u=0$ \cite[Lemma 2.4]{LiuJDE2007Schiffer}. If $\OS$ is a Schiffer
domain, then it is real analytic \cite{Williams1981SchifferAnalyticityfromLipschitz}.
If \eqref{eq:schifferProblem} admits infinitely many eigenvalues
$\mu_{S}$, then $\OS$ must be a ball \cite{BerensteinYang1987SchifferFiniteness}.
Counterexamples for the Schiffer conjecture can be given on manifolds
\cite{FallMinlendWeth2023Schiffer}.
\begin{claim}
Here, we prove the following connection of the Schiffer conjecture
to the problem of characterizing bad domains. 
\end{claim}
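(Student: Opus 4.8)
The statement to be proved is the equivalence: a bounded Lipschitz domain $\OS$ is a bad domain (in the sense that \eqref{eq:xi_overdet_EP-1} has a non-trivial solution) if and only if it is a Schiffer domain (in the sense that \eqref{eq:schifferProblem} has a non-trivial solution). The plan is to build an elementary, fully explicit dictionary between the two: from a Lam\'e eigenfunction $\psi$ solving \eqref{eq:xi_overdet_EP-1} I pass to the scalar field $u:=\Div\psi$, and from a Schiffer solution $u$ of \eqref{eq:schifferProblem} I pass back via $\psi:=\nabla u$. Everything rests on the identity $\Div(L(\psi))=\tfrac{\lambda_{0}}{2}\Delta\psi+(\tfrac{\lambda_{0}}{2}+\lambda_{1})\nabla\Div\psi$ and two of its consequences: applying $\Div$ once more gives $\Div\Div L(\psi)=(\lambda_{0}+\lambda_{1})\Delta(\Div\psi)$, while inserting the vector identity $\Delta\psi=\nabla\Div\psi-\nabla\times(\nabla\times\psi)$ rewrites \eqref{eq:xi_overdet_EP-1} equivalently as $(\lambda_{0}+\lambda_{1})\nabla\Div\psi-\tfrac{\lambda_{0}}{2}\nabla\times(\nabla\times\psi)=-\mu\psi$ in $\OS$. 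Interior smoothness of all eigenfunctions involved is automatic from elliptic regularity; the boundary regularity needed for the trace computations below is the delicate point, addressed at the end.

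For ``bad $\Rightarrow$ Schiffer'' I would take $(\psi,q)\neq(0,0)$ solving \eqref{eq:xi_overdet_EP-1} with $\mu=\mu_{k}>0$ and set $u:=\Div\psi$. Taking the divergence of the equation gives at once $-\Delta u=\mu_{S}u$ in $\OS$ with $\mu_{S}:=\mu/(\lambda_{0}+\lambda_{1})>0$. For the boundary conditions I use that $\psi=0$ on $\dOS$ forces the gradient of $\psi$ on $\dOS$ to be carried entirely by its normal derivative, so on $\dOS$ one has $\Div\psi=n\cdot\partial_{n}\psi$, $D(\psi)n=\tfrac12(\partial_{n}\psi+(n\cdot\partial_{n}\psi)n)$ and $\nabla\times\psi=n\times\partial_{n}\psi$. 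Inserting the first two into $L(\psi)n=qn$ gives $\tfrac{\lambda_{0}}{2}\partial_{n}\psi+(\tfrac{\lambda_{0}}{2}+\lambda_{1})(n\cdot\partial_{n}\psi)n=qn$; comparing tangential components forces $\partial_{n}\psi=\alpha n$ for a scalar $\alpha$ on $\dOS$, and comparing normal components then gives $(\lambda_{0}+\lambda_{1})\alpha=q$, so $\alpha\equiv q/(\lambda_{0}+\lambda_{1})=:c$ is constant and $u=\Div\psi=\alpha=c$ on $\dOS$ --- the Dirichlet condition of \eqref{eq:schifferProblem}. For the Neumann part, $\partial_{n}\psi=\alpha n$ gives $\nabla\times\psi=\alpha(n\times n)=0$ on $\dOS$; restricting the curl-form of the equation to $\dOS$ (where $\psi=0$) leaves $(\lambda_{0}+\lambda_{1})\nabla\Div\psi=\tfrac{\lambda_{0}}{2}\nabla\times(\nabla\times\psi)$, and dotting with $n$, together with the fact that $n\cdot(\nabla\times F)|_{\dOS}$ is a purely tangential expression in $F|_{\dOS}$ (explicitly $n\cdot\nabla\times F=\Div_{\dOS}(F\times n)$), makes the right-hand side vanish, so $\partial_{n}u=n\cdot\nabla\Div\psi=0$ on $\dOS$. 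Thus $u$ solves \eqref{eq:schifferProblem}. Finally $u\not\equiv0$: if $\Div\psi\equiv0$ then $\alpha=0$, hence $\psi$ and $\partial_{n}\psi$ both vanish on $\dOS$ while $\tfrac{\lambda_{0}}{2}\Delta\psi+\mu\psi=0$ in $\OS$, and unique continuation on the connected domain $\OS$ forces $\psi\equiv0$, a contradiction; since $\mu_{S}>0$ the solution $u$ is moreover non-constant.

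For the converse ``Schiffer $\Rightarrow$ bad'' I would take a non-trivial $u$ for \eqref{eq:schifferProblem}. Its boundary value satisfies $c\neq0$ (the case $c=0$ forces $u\equiv0$, as recalled above), and testing $-\Delta u=\mu_{S}u$ with $u$ and using $\partial_{n}u=0$ gives $\mu_{S}\|u\|_{L^{2}(\OS)}^{2}=\|\nabla u\|_{L^{2}(\OS)}^{2}$, which is positive as $u$ is non-constant, so $\mu_{S}>0$. Set $\psi:=\nabla u$. Then $-\Div L(\psi)=-(\lambda_{0}+\lambda_{1})\nabla\Delta u=(\lambda_{0}+\lambda_{1})\mu_{S}\nabla u=\mu\psi$ in $\OS$ with $\mu:=(\lambda_{0}+\lambda_{1})\mu_{S}>0$. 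On $\dOS$, $u\equiv c$ has vanishing tangential gradient and $\partial_{n}u=0$, so $\psi=\nabla u=0$ there, which is the Dirichlet condition of \eqref{eq:xi_overdet_EP-1}; moreover $\partial_{n}\psi=(\nabla^{2}u)n$ on $\dOS$, and since $\partial_{n}u$ vanishes identically along $\dOS$ while $\nabla u=0$ there, its tangential gradient vanishes, which is exactly the statement that $(\nabla^{2}u)n$ is normal on $\dOS$. Hence the tangential part of $L(\psi)n=\tfrac{\lambda_{0}}{2}\partial_{n}\psi+(\tfrac{\lambda_{0}}{2}+\lambda_{1})(n\cdot\partial_{n}\psi)n$ vanishes, and its normal part equals $(\lambda_{0}+\lambda_{1})(n\cdot\partial_{n}\psi)=(\lambda_{0}+\lambda_{1})\Div\psi|_{\dOS}=(\lambda_{0}+\lambda_{1})\Delta u|_{\dOS}=-(\lambda_{0}+\lambda_{1})\mu_{S}c$, a constant, so $L(\psi)n=qn$ with $q:=-(\lambda_{0}+\lambda_{1})\mu_{S}c$. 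As $\psi=\nabla u\not\equiv0$, the pair $(\psi,q)$ is a non-trivial solution of \eqref{eq:xi_overdet_EP-1} and $\OS$ is a bad domain. The two maps are mutually inverse up to the scalar $\mu_{S}$ (note $\Div\nabla u=\Delta u=-\mu_{S}u$), and the explicit constant $-\mu_{S}c$ appearing in the normal trace of $\nabla^{2}u$ is precisely what will feed Corollary \ref{cor:3neumannBC}.

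The step I expect to be the real obstacle is not this algebra but the \emph{boundary regularity} on a Lipschitz domain: every trace identity above manipulates $\partial_{n}\psi$, $\nabla^{2}u$, $\nabla\Div\psi|_{\dOS}$ and classical surface-differential identities on $\dOS$, which presuppose $\psi,u\in C^{2}$ up to $\dOS$ (or at least $H^{2}$ near $\dOS$ with a rectifiable normal field). The point is that the overdetermined structure is exactly what bootstraps regularity up to $\dOS$ --- and, for Schiffer domains, real analyticity --- so I would either invoke this a priori regularity, so that the Lipschitz assumption is no genuine restriction, or carry out each identity in weak form using the traces $\Div L(\xi)\in L^{2}(\OS)$ and $L(\xi)n\in H^{-1/2}(\dOS)$ already available in the semigroup framework, cf.\ \eqref{eq:DA2}. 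A secondary, minor point is the unique continuation property on the connected Lipschitz domain $\OS$, which enters only in the non-triviality of $u=\Div\psi$ in the first direction.
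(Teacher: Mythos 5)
Your proof is correct, uses the same explicit dictionary (take $\Div$ one way, take $\nabla$ the other, with the same constants $\lambda=\lambda_0+\lambda_1$, $\mu_S=\mu/\lambda$, $c=q/\lambda$ and $q=-\lambda\mu_S c$), but establishes the delicate boundary identity by a genuinely different and cleaner argument. The paper proves $\partial_n\Div\xi|_{\dOS}=0$ by working in local graph coordinates at a boundary point, differentiating the Dirichlet and Neumann data twice tangentially, producing the auxiliary quantities $A,B,C$ and the mean-curvature relations \eqref{eq:SH}, \eqref{eq:ST}, concluding $B(0)=C(0)$ and then reading off the normal derivative from the third component of the PDE. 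You instead rewrite $\Div L(\psi)=\lambda\nabla\Div\psi-\tfrac{\lambda_0}{2}\nabla\times(\nabla\times\psi)$, note that the Neumann data forces $\partial_n\psi=\alpha n$ so that $\nabla\times\psi|_{\dOS}=n\times\partial_n\psi=0$, and then use that $n\cdot\nabla\times F|_{\dOS}$ is a purely tangential functional of $F|_{\dOS}$ and hence vanishes. This is coordinate-free, avoids all second fundamental form computations, and is noticeably shorter; the paper's approach buys, as a byproduct, the explicit mean-curvature relations that feed Corollary~\ref{cor:3neumannBC}. You also supply a non-triviality check ($\Div\psi\not\equiv0$, via unique continuation for $\tfrac{\lambda_0}{2}\Delta\psi+\mu\psi=0$ with vanishing Cauchy data) which the paper leaves implicit. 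One point to fix: you open by announcing the equivalence for bounded Lipschitz domains, but your trace manipulations require $C^2$-up-to-the-boundary regularity and a $C^1$ normal field; the paper's Theorem~\ref{thm:Schiffer} correctly restricts to $C^{2,1}$ domains, and your closing paragraph already acknowledges this --- the statement at the outset should be conditioned the same way (or the a priori analyticity of Schiffer domains invoked at the start, not only at the end).
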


\begin{thm}
\label{thm:Schiffer}Let $\OS$ be a $C^{2,1}$-domain. Then $\OS$
is a bad domain, if and only if it is a Schiffer domain. 
\end{thm}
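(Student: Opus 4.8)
The plan is to establish a correspondence between non-trivial solutions $(\psi,q)$ of the vectorial overdetermined Lamé eigenvalue problem \eqref{eq:xi_overdet_EP-1} and non-trivial solutions $(u,c)$ of the scalar overdetermined Neumann problem \eqref{eq:schifferProblem}, in both directions. The natural bridge between the two is the Helmholtz/Hodge decomposition of a vector field into a gradient part and a divergence-free part. So first I would take a non-trivial pressure wave eigenfunction $\psi$ on a $C^{2,1}$ domain $\OS$, note that by elliptic regularity it is as smooth as the data allows, and decompose $L(\psi) = \lambda_0 D(\psi) + \lambda_1 (\Div\psi)\mathrm{Id}$ applied under the divergence. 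The key observation is that applying $\Div$ to the Lamé stress tensor gives $\Div L(\psi) = \tfrac{\lambda_0}{2}\Delta\psi + (\tfrac{\lambda_0}{2}+\lambda_1)\nabla(\Div\psi)$, so the eigenvalue equation reads $-\tfrac{\lambda_0}{2}\Delta\psi - (\tfrac{\lambda_0}{2}+\lambda_1)\nabla(\Div\psi) = \mu\psi$ with $\psi = 0$ on $\dOS$ and the extra condition $L(\psi)n = qn$ on $\dOS$.

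Next I would set $w := \Div\psi$ (a scalar field) and take the divergence of the eigenvalue equation: since $\Div\Delta\psi = \Delta\Div\psi = \Delta w$, we get $-\tfrac{\lambda_0}{2}\Delta w - (\tfrac{\lambda_0}{2}+\lambda_1)\Delta w = \mu w$, i.e. $-(\lambda_0+\lambda_1)\Delta w = \mu w$ in $\OS$, which is already a scalar Helmholtz equation with eigenvalue $\mu_S := \mu/(\lambda_0+\lambda_1)$. It remains to extract the two scalar boundary conditions. Because $\psi = 0$ on $\dOS$, the tangential derivatives of $\psi$ vanish on the boundary, so on $\dOS$ we have $D(\psi) = \tfrac{1}{2}((\partial_n\psi)\otimes n + n\otimes(\partial_n\psi))$ and $\Div\psi = \partial_n\psi\cdot n = w$; feeding this into the extra boundary condition $L(\psi)n = qn$, one obtains $\tfrac{\lambda_0}{2}\partial_n\psi + (\tfrac{\lambda_0}{2}+\lambda_1)w\, n = q\,n$ on $\dOS$, whose tangential component forces the tangential part of $\partial_n\psi$ to vanish and whose normal component gives a relation between $w$ and $q$. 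From this I would deduce both that $\partial_n w = 0$ on $\dOS$ (differentiating the eigenvalue equation or using that $\psi$ vanishes to first order suitably, together with $\partial_n\psi \parallel n$) and that $w$ is constant on $\dOS$ equal to $c := q/(\tfrac{\lambda_0}{2}+\lambda_1 - \tfrac{\lambda_0}{2}) = q/\lambda_1$ or similar. That produces a non-trivial solution of \eqref{eq:schifferProblem}, unless $w \equiv 0$; in that case $\psi$ would be a divergence-free Lamé eigenfunction satisfying $-\tfrac{\lambda_0}{2}\Delta\psi = \mu\psi$, $\psi|_{\dOS}=0$, $\partial_n\psi \parallel n$ on $\dOS$, and one checks this is incompatible with $\psi\not\equiv 0$ (e.g. by testing with $\psi$ and integrating, or by a unique-continuation/overdetermination argument), so the Schiffer solution is genuinely non-trivial.

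For the converse direction, starting from a non-trivial scalar solution $u$ of \eqref{eq:schifferProblem} with eigenvalue $\mu_S$ and boundary constant $c \neq 0$ (recall $c=0$ forces $u=0$), I would construct a pressure wave by setting $\psi := \nabla u + (\text{correction})$. The cleanest choice is to look for $\psi$ of the form $\psi = \alpha\nabla u + \beta v$ where $v$ handles the boundary conditions; but in fact, since $-\Delta u = \mu_S u$ and $\partial_n u = 0$ on $\dOS$, the gradient field $\nabla u$ satisfies $-\Delta(\nabla u) = \mu_S\nabla u$ in $\OS$ and has vanishing tangential trace only where $u$ is locally constant on $\dOS$ — which is exactly what the Dirichlet condition $u = c$ on $\dOS$ guarantees: $\nabla u|_{\dOS}$ is purely normal, hence $\nabla u = (\partial_n u)n = 0$ on $\dOS$. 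So $\psi := \nabla u$ already satisfies $\psi = 0$ on $\dOS$. Then $\Div\psi = \Delta u = -\mu_S u$, and on the boundary $L(\psi)n = (\tfrac{\lambda_0}{2}+\lambda_1)(\Div\psi)n + (\text{terms in }D(\psi)\text{ minus its diagonal})$; since $\psi|_{\dOS}=0$ the tangential-tangential part of $D(\psi)$ vanishes and $D(\psi)n$ on $\dOS$ reduces to a multiple of $n$ plus $\tfrac12\partial_n\psi$, and one computes $\partial_n\psi = \partial_n\nabla u = \nabla\partial_n u - (\text{curvature terms})$, which on $\dOS$ is normal because $\partial_n u \equiv 0$ there and the tangential gradient of the constant $c$ vanishes. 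Collecting terms gives $L(\psi)n = q n$ on $\dOS$ for a suitable scalar $q$ (proportional to $c$ and the boundary value of $-\mu_S u = -\mu_S c$), and matching the interior equation fixes $\mu := (\lambda_0+\lambda_1)\mu_S$ via $-\Div L(\psi) = -\tfrac{\lambda_0}{2}\Delta\psi - (\tfrac{\lambda_0}{2}+\lambda_1)\nabla\Div\psi = \mu\psi$ using $\Delta\psi = \nabla\Delta u = -\mu_S\nabla u$ and $\nabla\Div\psi = \nabla\Delta u = -\mu_S\nabla u$. Finally, $\psi = \nabla u \not\equiv 0$ because $u$ is non-constant (as $\mu_S > 0$), so $\OS$ is a bad domain.

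The main obstacle I expect is the careful handling of the boundary geometry: the identities like $\partial_n(\nabla u) = \nabla(\partial_n u) - (\nabla n)(\nabla u)$ involve the Weingarten map / second fundamental form, and one must verify that on $\dOS$ all the curvature-dependent cross terms either vanish (because $\psi$ or $\partial_n u$ vanishes there) or combine into a pure normal vector, so that the condition $L(\psi)n = qn$ holds with a genuinely \emph{scalar}, spatially constant $q$ rather than a general boundary function. This is where the $C^{2,1}$ regularity hypothesis is used: it guarantees $u \in C^{2,\alpha}$ up to the boundary (by Schauder theory) so that second normal derivatives and the trace computations are classical, and it makes the boundary $\dOS$ smooth enough for the Weingarten map to be bounded and for the tangential-differentiation arguments to be rigorous. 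A secondary technical point is confirming the $w\equiv 0$ (resp. $u$ constant) degenerate cases really cannot occur, which follows from the strict positivity $\mu > 0$ of the Dirichlet-Lamé spectrum together with the extra overdetermined boundary data, but should be stated explicitly.
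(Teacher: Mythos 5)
Your approach is the same as the paper's: take the divergence $w=\Div\psi$ of a Lam\'e pressure-wave eigenfunction to produce a Schiffer solution, and the gradient $\nabla u$ of a Schiffer solution to produce a Lam\'e solution. The interior PDE transformation and the Dirichlet boundary value $w|_{\dOS}=c$ are handled correctly (though your constant $c=q/\lambda_1$ is a slip: from $L(\psi)n=\frac{\lambda_0}{2}\partial_n\psi+(\frac{\lambda_0}{2}+\lambda_1)w\,n=q\,n$ and $\partial_n\psi=w\,n$, one gets $(\lambda_0+\lambda_1)w=q$, so $c=q/(\lambda_0+\lambda_1)$).

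The genuine gap is the scalar Neumann condition $\partial_n w=0$ on $\dOS$ in the forward direction. You write that you "would deduce" it by "differentiating the eigenvalue equation or using that $\psi$ vanishes to first order suitably, together with $\partial_n\psi\parallel n$," but this does not close. In boundary-fitted coordinates at a point $0\in\dOS$ (with $n=e^3$), set $A=\partial_3^2\psi_3$, $B=\partial_1^2\psi_3+\partial_2^2\psi_3$, $C=\partial_3\partial_1\psi_1+\partial_3\partial_2\psi_2$; then $\partial_n w(0)=A(0)+C(0)$, while the normal component of the interior PDE at $0$ only yields $(\lambda_0+\lambda_1)A+(\frac{\lambda_0}{2}+\lambda_1)C+\frac{\lambda_0}{2}B=0$. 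This single relation is not enough. The missing ingredient — which is the actual content of the paper's computation — is the identity $B(0)=C(0)$, obtained by \emph{tangentially differentiating both boundary conditions}: a second tangential derivative of $\psi|_{\dOS}=0$ gives $B(0)=-\frac{q}{\lambda}H(0)$ (where $H$ is the mean curvature), and one tangential derivative of $L(\psi)n=qn$ gives $\frac{\lambda_0}{2}(B+C)(0)=-\frac{\lambda_0}{\lambda}qH(0)$; combining these, the curvature terms cancel and $B=C$, whence $A+C=0$. Without this curvature cancellation the Neumann condition for $w$ is simply unproved, and it is far from obvious — indeed it is what makes the equivalence work and what consumes the $C^{2,1}$ regularity. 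You flag "curvature-dependent cross terms" as a concern at the end, but the proposal never resolves them; a referee would return the forward direction as incomplete. (The converse direction has a parallel but milder version of the same issue, where the third Neumann component for $V=\nabla u$ must be shown constant; the paper again does this by tangential differentiation of both boundary conditions followed by an appeal to the PDE.)
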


\begin{proof}
The theorem is due to the following two (simple) explicit constructions:
\begin{enumerate}
\item \label{enu:div}If $\xi$ solves \eqref{eq:xi_overdet_EP-1}, then
$v=\Div\xi$ solves \eqref{eq:schifferProblem} with $\mu_{S}=\frac{\mu}{\lambda}$
and $c=\frac{q}{\lambda}$, where $\lambda=\lambda_{0}+\lambda_{1}$. 
\item \label{enu:grad}If $u$ solves \eqref{eq:schifferProblem}, then
$V=\nabla u$ solves \eqref{eq:xi_overdet_EP-1} with $\mu=\lambda\mu_{S}$
and $q=-\lambda\mu_{S}c$. 
\end{enumerate}
We first show \eqref{enu:div}. A direct calculation establishes the
PDE: 
\[
\frac{\mu}{\lambda}u=\frac{\mu}{\lambda}\Div\xi=-\frac{1}{\lambda}\Div\Div L(\xi)=-\Delta\Div\xi=-\Delta u.
\]
We need to consider the boundary conditions. For the following calculations,
there are several choices of the presentation, e.g. in the language
of differential geometry or by using local charts. We use a simple
slightly hybrid version. Let $x^{0}\in\dOS$ be given. Possibly after
a rigid motion, withouth loss of generality, we can assume that $x^{0}=0,$
that $n(x^{0})=(0,0,1)^{T}=e^{3}$ is the unit outer normal vector
of $\OS$ at this point, that $e^{1}:=(1,0,0)^{T},e^{2}:=(0,1,0)^{T}$
are tangent vectors, and that in a neighbourhood $U$ of $0\in\dOS$,
$\dOS$ is given as the graph of a $C^{2,\beta}$-function, 
\[
x\in\dOS\cap U\Leftrightarrow x=\left(\begin{array}{c}
x_{1}\\
x_{2}\\
h(x_{1},x_{2})
\end{array}\right),(x_{1},x_{2})^{T}\in V:=h^{-1}(U\cap\dOS).
\]
This means that for any $x=(x_{1},x_{2},x_{3})=(\bar{x},x_{3})\in U\cap\dOS$,
normal and tangent vectors (not necesarrily of unit length) are given
by 
\[
n(x):=\left(\begin{array}{c}
-\partial_{1}h(\bar{x})\\
-\partial_{2}h(\bar{x})\\
1
\end{array}\right),\qquad\tau^{1}(x):=\left(\begin{array}{c}
1\\
0\\
\partial_{1}h(\bar{x})
\end{array}\right),\qquad\tau^{2}(x):=\left(\begin{array}{c}
0\\
1\\
\partial_{2}h(\bar{x})
\end{array}\right),
\]
and that $\partial_{1}h(0,0)=\partial_{2}h(0,0)=0$ hold, cf.~\cite[Chapter I.2]{wloka}.
At any $x\in\dOS\cap U$, for all $k=1,2,3$, the Dirichlet boundary
condition in \eqref{eq:xi_overdet_EP-1} implies 
\begin{equation}
0=\partial_{\tau^{j}(x)}\xi_{k}(x)=\overset{x\to0}{\to}\partial_{j}\xi_{k}(0),\qquad j=1,2.\label{eq:DBC1}
\end{equation}
Using this, at $x=0$, the Neumann boundary condition in \eqref{eq:xi_overdet_EP-1}
provides
\begin{align}
0 & =\lambda_{0}D(\xi)_{13}(0)=\frac{\lambda_{0}}{2}\partial_{3}\xi_{1}(0),\nonumber \\
0 & =\lambda_{0}D(\xi)_{23}(0)=\frac{\lambda_{0}}{2}\partial_{3}\xi_{2}(0),\nonumber \\
q & =(L(\xi))_{33}(0)=\lambda\partial_{3}\xi_{3}(0).\label{eq:NBC1}
\end{align}
In particular, by summing up, we obtain the Dirichlet condition for
$v$, $v(0)=\Div\xi(0)=\frac{q}{\lambda}$. It remains to prove the
Neumann boundary condition $\partial_{3}v(0)=0$. To improve the notation
in the remaining calculations, for all $x\in\OS\cap U,$we define
\begin{align*}
A(x) & :=\partial_{3}^{2}\xi_{3}(x),\\
B(x) & :=\partial_{1}^{2}\xi_{3}(x)+\partial_{2}^{2}\xi_{3}(x),\\
C(x) & :=\partial_{3}\partial_{1}\xi_{1}(x)+\partial_{3}\partial_{2}\xi_{2}(x),
\end{align*}
and sometimes omit the argument $(x)$. Tangentially differentiating
the Dirichlet boundary condition \eqref{eq:DBC1} a second time gives,
for all $x\in\dOS\cap U$, 
\begin{align*}
0 & =\partial_{\tau^{j}(x)}^{2}\xi_{3}(x)=\Sigma_{k,l=1}^{3}\tau_{l}^{j}\partial_{l}\tau_{k}^{j}\partial_{k}\xi_{3}+\tau_{l}^{j}\tau_{k}^{j}\partial_{k}\partial_{l}\xi_{3}\\
 & \overset{x=0}{=}\partial_{3}\xi_{3}(0)\partial_{j}^{2}h(0)+\partial_{j}^{2}\xi_{3}(0)\overset{\eqref{eq:NBC1}}{=}\frac{q}{\lambda}\partial_{j}^{2}h(0)+\partial_{j}^{2}\xi_{3}(0),\qquad j=1,2,
\end{align*}
so 
\begin{equation}
B(0)=-\frac{q}{\lambda}H(0),\label{eq:SH}
\end{equation}
where $H(0)=\partial_{1}^{2}h(0)+\partial_{2}^{2}h(0)$ denotes the
mean curvature of $\dOS$ at $0$. Also for all $x\in\dOS\cap U$,
the Neumann boundary condition for $\xi$ gives 
\[
0=\tau^{j}(x)\cdot qn(x)=\tau^{j}(x)\cdot(L(\xi)n)(x),\qquad j=1,2.
\]
From differentiating these quantities, we get, for both $j=1,2$,
\begin{align*}
0 & =\partial_{\tau^{j}(x)}\left(\tau^{j}(x)\cdot qn(x)\right)=\Sigma_{k=1}^{3}\partial_{j}^{2}h(x)\sigma_{3k}n_{k}+\tau^{j}(x)\left(\partial_{j}\sigma\right)n+\tau^{j}\sigma\left(\partial_{j}n(x)\right)\\
 & \overset{x=0}{=}\partial_{j}^{2}h(0)\sigma_{33}(0)+\partial_{j}\sigma_{j3}(0)+\Sigma_{l=1}^{2}\sigma_{jl}(0)\left(-\partial_{j}\partial_{l}h(0)\right)\\
 & \overset{\eqref{eq:DBC1},\eqref{eq:NBC1}}{=}\partial_{j}^{2}h(0)q+\frac{\lambda_{0}}{2}\left(\partial_{j}^{2}\xi_{3}+\partial_{3}\partial_{j}\xi_{j}\right)-\partial_{j}^{2}h(0)\lambda_{1}\frac{q}{\lambda}.
\end{align*}
By summing up, we have 
\begin{equation}
\frac{\lambda_{0}}{2}(C+B)(0)=-\frac{\lambda_{0}}{\lambda}qH(0).\label{eq:ST}
\end{equation}
Using \eqref{eq:SH}, this gives
\begin{equation}
B(0)=C(0).\label{eq:ST2}
\end{equation}
For all $x\in\OS\cap U$, the PDE for the third component $\xi_{3}$
reads as 
\begin{equation}
\lambda(A+C)(x)-\frac{\lambda_{0}}{2}(C-B)(x)=-\mu\xi_{3}(x).\label{eq:PDE3}
\end{equation}
By combining the previous identities, we thus obtain 
\[
\lambda\partial_{3}v(x)=\lambda(A+C)(x)\overset{\eqref{eq:PDE3}}{=}-\mu\xi_{3}(x)+\frac{\lambda_{0}}{2}(C-B)(x)\overset{x\to0,\eqref{eq:ST2}}{\to}-\mu\xi_{3}(0)=0.
\]
Now let $u$ be a given solution of \eqref{eq:schifferProblem}. Then,
clearly, $-\Div L(\nabla u)=\lambda\mu_{S}\nabla u.$ We prove the
boundary conditions for the converse statement: As before, fix $0=x^{0}\in\dOS$,
and let $\dOS\cap U$ be given by the graph of $h$. Due to the constant
Dirichlet boundary condition $u|_{\dOS}=c$, the tangential derivatives
$\partial_{\tau^{1}(x)}u(x)=\partial_{\tau^{2}(x)}u(x)=0$ vanish.
At $x=0,$ $\partial_{1}u(0)=\partial_{\tau^{1}(0)}u(0)$, $\partial_{2}u(0)=\partial_{\tau^{2}(0)}u(0)$
and $\partial_{3}u(0)=\partial_{n}u(0)=0$ is the given Neumann boundary
condition, so $V(0)=\nabla u(0)=0$. It remains to prove the three
components of the Neumann boundary condition, namely that for some
$q\in\mathbb{R}$, 
\begin{align}
\left(L(V)n\right)_{1}(0)=\lambda_{0}\partial_{1}\partial_{3}u(0) & \overset{!}{=}0,\nonumber \\
\left(L(V)n\right)_{2}(0)=\lambda_{0}\partial_{2}\partial_{3}u(0) & \overset{!}{=}0,\nonumber \\
\left(L(V)n\right)_{3}(0)=\lambda_{0}\partial_{3}^{2}u(0)+\lambda_{1}\partial_{3}(\Div u)(0) & \overset{!}{=}q.\label{eq:NBtoshow}
\end{align}
By differentiating the homogenous Neumann boundary condition for $u$
in tangential directions, and using that $\nabla u(0)=0$, we see
that for all $x\in\dOS\cap U$, 
\begin{equation}
0=\partial_{\tau^{j}(x)}\partial_{n(x)}u(x)=\Sigma_{k,l=1}^{3}\tau_{k}^{j}\left(\left(\partial_{k}\partial_{l}u\right)n_{l}+\left(\partial_{l}u\right)\left(\partial_{k}n_{l}\right)\right)\overset{x=0}{=}\partial_{j}\partial_{3}u(0),\qquad j=1,2.\label{eq:NBC3}
\end{equation}
This takes care of the first two Neumann boundary conditions for $V$.
The constant Dirichlet boundary condition for $u$ implies 
\[
0=\partial_{\tau^{j}(x)}u(x)=\partial_{j}u(x)+\partial_{3}u(x)(\partial_{j}h(x))),\qquad j=1,2.
\]
By differentiating these quantities, and using the Neumann condition
$\partial_{3}u(0)=0$, we get
\begin{equation}
0=\partial_{j}^{2}u(x)+\partial_{j}\partial_{3}u(x)(\partial_{j}h(x))+\partial_{3}u(x)(\partial_{j}^{2}h(x))\overset{x=0}{=}\partial_{j}^{2}u(0),\qquad j=1,2.\label{eq:DBC3}
\end{equation}
For the third line in \eqref{eq:NBtoshow}, by using the PDE, we thus
obtain
\begin{align*}
\left(L(V)n\right)_{3}(0) & \overset{\eqref{eq:NBC3}}{=}\lambda\partial_{3}^{2}u(0)\overset{\eqref{eq:DBC3}}{=}\lambda\Delta u(0)\\
 & =\lim_{x\to0}\lambda\Delta u(x)=\lim_{x\to0}-\lambda\mu_{S}u(x)=-\lambda\mu_{S}c=:q,
\end{align*}
and Theorem \ref{thm:Schiffer} is proved. 
\end{proof}
\begin{rem}
The construction in the proof of the previous theorem shows that if
$\xi$ is a sufficiently smooth solution of \eqref{eq:xi_overdet_EP-1},
then $\Xi=-\frac{1}{\mu}\nabla\Div\xi$ is also a solution of \eqref{eq:xi_overdet_EP-1}
with the same eigenvalue and the same constant in the Neumann condition.
This means that if there exists a non-trivial solution at all, then
there must also be a non-trivial gradient-of-a-potential solution.
To prove Conjecture \ref{conj:Baddomains} (and the Schiffer conjecture),
it would thus be sufficient to show that there is no \emph{potential
}solution of \eqref{eq:xi_overdet_EP-1}, unless $\OS$ is a ball.
\\
We also remark that from $\nabla Q$ a potential solution of \eqref{eq:xi_overdet_EP-1},
it is possible to go ,,up'' to the Schiffer problem, i.e. then $-\mu Q+C=\lambda\Delta Q$,
$Q|_{\dOS}=c$ and $\partial_{n}Q|_{\dOS}=0$ for some constants $C,c\in\mathbb{R}$,
so $Q-\frac{C}{\mu}$ solves the Schiffer problem.
\end{rem}

\begin{prop}
Theorem \ref{thm:Schiffer} also holds if \eqref{eq:xi_overdet_EP-1}
is replaced by the corresponding system of wave equations 
\begin{align}
\begin{split}\begin{cases}
\begin{array}{rcll}
-\Delta\psi & = & \mu\psi & \text{in }\OS,\\
\psi & = & 0 & \text{on }\dOS,\\
(\nabla\psi)n & = & qn & \text{on }\dOS.
\end{array}\end{cases}\end{split}
\label{eq:xi_WAVE}
\end{align}
\end{prop}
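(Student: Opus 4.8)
The plan is to prove this exactly as Theorem~\ref{thm:Schiffer}: the wave system \eqref{eq:xi_WAVE} is \eqref{eq:xi_overdet_EP-1} with the Lam\'e stress $L(\psi)$ replaced by $\nabla\psi$ (so that, componentwise, each $\psi_i$ satisfies a scalar Helmholtz equation and the boundary flux of $\psi_i$ is $\partial_n\psi_i$), and because $-\Delta$ now acts componentwise the boundary computations decouple and shorten. I would again build two explicit maps: first, if $\psi$ solves \eqref{eq:xi_WAVE} then $v:=\Div\psi$ solves \eqref{eq:schifferProblem} with $\mu_S=\mu$, $c=q$; second, if $u$ solves \eqref{eq:schifferProblem} then $V:=\nabla u$ solves \eqref{eq:xi_WAVE} with $\mu=\mu_S$, $q=-\mu_S c$. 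As in the proof of Theorem~\ref{thm:Schiffer}, I would work at a single boundary point $0=x^{0}\in\dOS$ after a rigid motion flattening $\dOS$ near $0$ as the graph of a $C^{2,1}$-function $h$ with $n(0)=e^{3}$, $\partial_{1}h(0)=\partial_{2}h(0)=0$, using Schauder regularity (and, in the second construction, the real-analyticity of Schiffer domains \cite{Williams1981SchifferAnalyticityfromLipschitz}) to legitimise the second-order boundary traces and the limits $x\to0$.

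For the first map the interior identity is immediate, $-\Delta v=\Div(-\Delta\psi)=\mu\Div\psi=\mu v$. From $\psi=0$ on $\dOS$ all tangential first derivatives of $\psi$ vanish at $0$, and the Neumann condition $(\nabla\psi)n=qn$ then reads $\partial_{n}\psi_i(0)=q\,\delta_{i3}$; in particular $\partial_{3}\psi_{3}(0)=q$, so $v(0)=\Div\psi(0)=\partial_{3}\psi_{3}(0)=q=:c$. For the remaining condition $\partial_{n}v(0)=\partial_{3}v(0)=0$ I would reuse the abbreviations $A=\partial_{3}^{2}\psi_{3}$, $B=\partial_{1}^{2}\psi_{3}+\partial_{2}^{2}\psi_{3}$, $C=\partial_{3}\partial_{1}\psi_{1}+\partial_{3}\partial_{2}\psi_{2}$, so that $\partial_{3}v(0)=A(0)+C(0)$. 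Differentiating $\psi_{3}=0$ tangentially twice and using $\partial_{3}\psi_{3}(0)=q$ gives $B(0)=-q\,H(0)$, exactly as in \eqref{eq:SH}, with $H(0)$ the mean curvature at $0$; differentiating the tangential components of $(\nabla\psi)n=qn$ along $\dOS$ and using the vanishing tangential first derivatives gives $C(0)=-q\,H(0)$, the analogue of \eqref{eq:ST}--\eqref{eq:ST2} (here with no cross terms, since $L$ is replaced by $\nabla$); and the scalar equation $-\Delta\psi_{3}=\mu\psi_{3}$ at $0$ gives $A(0)+B(0)=-\mu\psi_{3}(0)=0$. Combining, $\partial_{3}v(0)=A(0)+C(0)=-B(0)+B(0)=0$.

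For the second map, $-\Delta V=\nabla(-\Delta u)=\mu_{S}\nabla u=\mu_{S}V$, and since $u\equiv c$ on $\dOS$ with $\partial_{n}u=0$ one has $\nabla u=0$ on $\dOS$, i.e.\ $V=0$ on $\dOS$. For $(\nabla V)n=qn$, at the flattened point $((\nabla V)n)_{i}=\partial_{i}\partial_{3}u(0)$: the cases $i=1,2$ vanish by differentiating the homogeneous Neumann condition for $u$ tangentially, exactly as in \eqref{eq:NBC3}, and the case $i=3$ gives $\partial_{3}^{2}u(0)=\Delta u(0)=-\mu_{S}u(0)=-\mu_{S}c=:q$, after differentiating $u\equiv c$ tangentially twice as in \eqref{eq:DBC3}. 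Finally I would check that non-triviality transfers: if $\psi$ solved \eqref{eq:xi_WAVE} with $q=0$, it would have vanishing Cauchy data on $\dOS$ for the (componentwise, constant-coefficient) Helmholtz equation, hence $\psi\equiv0$ by Holmgren's uniqueness theorem; so a non-trivial $\psi$ has $q\neq0$ and then $v(0)=q\neq0$ on $\dOS$ forces $v\not\equiv0$. Conversely a non-trivial Schiffer solution has $c\neq0$ \cite{LiuJDE2007Schiffer}, and $\nabla u\equiv0$ would force $u\equiv c$ and $0=-\Delta u=\mu_{S}c$, a contradiction, so $V=\nabla u\not\equiv0$. I do not expect a genuine obstacle: the only non-bookkeeping ingredients are this unique-continuation remark and the boundary regularity needed for the traces, and the algebra is strictly lighter than in Theorem~\ref{thm:Schiffer}.
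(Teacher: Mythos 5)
Your proposal is correct and follows precisely the approach the paper intends: the paper's proof of this proposition simply refers back to Theorem~\ref{thm:Schiffer} and says to take the divergence or the gradient, noting the calculations are slightly simpler. You carry out exactly those two constructions, work through the boundary computations at a flattened point in full (matching the paper's scheme with $B(0)=C(0)=-qH(0)$ and the Helmholtz identity $A(0)+B(0)=0$, now with $\lambda$ replaced by $1$), and additionally supply the non-triviality check via unique continuation and the known fact that $c\neq0$ for Schiffer solutions, which the paper leaves implicit.
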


\begin{proof}
The proof follows the same constructions as for Theorem \ref{thm:Schiffer},
by taking the divergence or the gradient, respectively. The calculations
are slightly simpler. 
\end{proof}
\begin{cor}
The ball is the only bad domain if and only if the Schiffer Conjecture
holds. Bad domains are real analytic. The good domains in Proposition
\ref{prop:ATgooddomains} are not Schiffer domains. 
\end{cor}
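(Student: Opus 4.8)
The plan is to read off all three assertions from Theorem \ref{thm:Schiffer} together with the facts recalled at the beginning of Section \ref{sec:Schiffer}: the ball is a bad domain (Example \ref{exa:ball}), and every bounded Lipschitz Schiffer domain is real analytic \cite{Williams1981SchifferAnalyticityfromLipschitz}. No new computation is needed; the only care required is in tracking the boundary regularity, since Theorem \ref{thm:Schiffer} is stated for $C^{2,1}$-domains.

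First I would reduce the equivalence ``the ball is the only bad domain $\iff$ the Schiffer conjecture holds''. By Example \ref{exa:ball} every ball is bad, so the left-hand side is equivalent to the statement that every bad domain is a ball, while the Schiffer conjecture is by definition the statement that every bounded Lipschitz Schiffer domain is a ball. For the implication $\Leftarrow$: given a bad $C^{2,1}$-domain $\OS$, Theorem \ref{thm:Schiffer} makes it a Schiffer domain, hence a ball by the conjecture. For $\Rightarrow$: given a bounded Lipschitz Schiffer domain $\OS$, by \cite{Williams1981SchifferAnalyticityfromLipschitz} it is real analytic, in particular of class $C^{2,1}$, so Theorem \ref{thm:Schiffer} applies and makes $\OS$ a bad domain, hence a ball by hypothesis. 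The second assertion is then just the composition of the implication ``bad $\Rightarrow$ Schiffer'' (Theorem \ref{thm:Schiffer}) with ``Schiffer $\Rightarrow$ real analytic'' \cite{Williams1981SchifferAnalyticityfromLipschitz}. The third assertion I would prove by contradiction: if one of the good domains $\OS$ from Proposition \ref{prop:ATgooddomains} were a Schiffer domain, it would be real analytic by \cite{Williams1981SchifferAnalyticityfromLipschitz}, hence $C^{2,1}$, hence a bad domain by Theorem \ref{thm:Schiffer}; but no domain can be simultaneously good ($K=\emptyset$) and bad ($K\neq\emptyset$).

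The one point I expect to require genuine care, and which I would address explicitly, is the regularity threshold. In the directions that start from a Schiffer domain there is no difficulty, because \cite{Williams1981SchifferAnalyticityfromLipschitz} upgrades Lipschitz regularity to analyticity \emph{before} Theorem \ref{thm:Schiffer} is invoked. In the direction that starts from a bad domain — which a priori is only Lipschitz — one option is simply to restrict, as in Theorem \ref{thm:Schiffer}, to $C^{2,1}$-domains; this is harmless for the statement of Conjecture \ref{conj:Baddomains} and for every later use of the corollary. The other option is a short elliptic bootstrap: the Dirichlet--Lam\'e eigenfunctions inherit whatever boundary regularity is available, and the extra Neumann datum $L(\psi)n=qn$ on $\dOS$ then feeds back into the regularity of $\dOS$ itself, which can be iterated up to $C^{2,1}$ (and, via \cite{Williams1981SchifferAnalyticityfromLipschitz}, to analyticity). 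I would adopt the first option in the main text and merely remark on the second.
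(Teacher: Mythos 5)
Your proof is correct and follows the natural (and only available) route: the paper gives no separate argument for this corollary, which is read off directly from Theorem \ref{thm:Schiffer}, Example \ref{exa:ball}, and the Williams analyticity result, exactly as you do. The regularity caveat you flag is real — Theorem \ref{thm:Schiffer} is stated for $C^{2,1}$-domains, so the implication ``bad $\Rightarrow$ Schiffer'' (and hence ``bad $\Rightarrow$ real analytic'') in principle needs either the tacit restriction to $C^{2,1}$-domains you propose or the regularity bootstrap you sketch; your choice to state and handle this explicitly is a genuine improvement over leaving it implicit.
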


The calculations in the proof of Theorem \ref{thm:Schiffer} show
the following sufficient conditions for $\OS$ to be a ball that were
also obtained in \cite{Liu2007JMAASchiffer,LiuJDE2007Schiffer} for
the three-dimensional and in \cite{KawohlLucia2020} for the two-dimensional
case.
\begin{cor}
\label{cor:3neumannBC}If $u$ satisfies \eqref{eq:schifferProblem}
and the third-order Neumann condition $\partial_{n}^{3}u|_{\dOS}=C_{S}$
for some constant $C_{S}\geq0$, then $\OS$ is a ball. 
\end{cor}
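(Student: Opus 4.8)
The plan is to combine the identities already derived in the proof of Theorem \ref{thm:Schiffer} with the extra third-order Neumann hypothesis to force the mean curvature of $\dOS$ to be constant, and then invoke Alexandrov's soap-bubble theorem (a $C^{2,1}$ hypersurface in $\mathbb{R}^3$ of constant mean curvature bounding a domain is a sphere). Starting from a solution $u$ of \eqref{eq:schifferProblem}, I would pass through the gradient construction \eqref{enu:grad}, so that $V=\nabla u$ solves \eqref{eq:xi_overdet_EP-1} with $\mu=\lambda\mu_S$ and $q=-\lambda\mu_S c$; equivalently, work directly with $u$ and the relations \eqref{eq:NBC3}, \eqref{eq:DBC3}, at an arbitrary but fixed boundary point $x^0=0\in\dOS$ written as a graph $x_3=h(\bar x)$ as in the proof above.

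First I would record, exactly as in that proof, that at $0$ one has $\nabla u(0)=0$, $\partial_j^2 u(0)=0$ for $j=1,2$, $\partial_j\partial_3 u(0)=0$ for $j=1,2$, and $\lambda\partial_3^2 u(0)=\lambda\Delta u(0)=-\lambda\mu_S c=q$. Next I would differentiate the homogeneous Neumann condition $\partial_{n(x)}u(x)=0$ twice tangentially along $\dOS$, using $\partial_j h(0)=0$ and the vanishing first- and relevant second-order derivatives just listed, to extract a boundary identity relating $\partial_3^3 u(0)$, the tangential Laplacian of $u$ restricted to $\dOS$ (which vanishes since $u|_{\dOS}\equiv c$), and the curvature term $H(0)\,\partial_3^2 u(0)$ coming from $\partial_{\tau^j}n$. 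This is the same type of computation that produced \eqref{eq:SH} and \eqref{eq:NBC3}; carrying it one order higher should yield, after summing over $j=1,2$, an identity of the schematic form $\partial_n^3 u(0) = -\,H(0)\,\partial_n^2 u(0) + (\text{terms that vanish})$, hence $\partial_n^3 u(0) = -\,H(0)\,q/\lambda$. Using the PDE $-\Delta u=\mu_S u$ to rewrite $\Delta(\partial_3 u)$ in terms of $\mu_S\partial_3 u$ and then evaluating at $0$ (where $\partial_3 u=0$) may be needed to dispose of the interior second-derivative contributions; this bookkeeping, parallel to \eqref{eq:ST}--\eqref{eq:PDE3}, is the routine part.

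With the hypothesis $\partial_n^3 u|_{\dOS}=C_S$ constant, the identity $\partial_n^3 u(0)=-\,q\,H(0)/\lambda$ holds at every boundary point $x^0$, so $H$ is constant on $\dOS$ provided $q\neq 0$. That $q\neq 0$ is automatic: if $q=0$ then $c=0$ (since $q=-\lambda\mu_S c$ and $\mu_S>0$), and by \cite[Lemma 2.4]{LiuJDE2007Schiffer} the case $c=0$ forces $u\equiv 0$, contradicting non-triviality. Finally, since $\OS$ is a bounded $C^{2,1}$-domain whose boundary has constant mean curvature, Alexandrov's theorem gives that $\OS$ is a ball. The sign of $C_S$ is used only to exclude the degenerate orientation (or, more precisely, the hypothesis $C_S\ge 0$ together with $H\ge 0$ for the outward normal on a bounded domain makes the constant-mean-curvature conclusion unambiguous), so I would state the curvature normalization carefully there. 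The main obstacle I anticipate is purely computational: keeping track of all Christoffel-type terms $\tau^j_l\partial_l\tau^j_k$ and $\partial_j n_l$ when differentiating the Neumann condition a third time, and confirming that every interior contribution either vanishes at $0$ by the lower-order boundary identities or reassembles into $-H(0)q/\lambda$; once that identity is pinned down, the geometric conclusion via Alexandrov is immediate.
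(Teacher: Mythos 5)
Your proposal follows essentially the same route as the paper: relate the third normal derivative on $\dOS$ to the mean curvature $H$ through the overdetermined boundary conditions, deduce that $H$ is constant, and conclude via Alexandrov's theorem. The paper is more economical, though: rather than re-running the tangential differentiation one order higher, it specializes the already-established identity \eqref{eq:SH} to $\xi=\nabla u$, noting $B(0)=\Delta\partial_n u(0)-\partial_n^3 u(0)$ and $\Delta\partial_n u(0)=-\mu_S\partial_n u(0)=0$, which yields $\partial_n^3 u(0)=+\frac{q}{\lambda}H(0)$ (positive sign, not $-H(0)q/\lambda$ as in your schematic); your remark that $q\neq 0$ via \cite[Lemma 2.4]{LiuJDE2007Schiffer} is a useful detail the paper leaves implicit.
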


\begin{proof}
Let $u$ be a solution of \eqref{eq:schifferProblem}. Then by Theorem
\ref{thm:Schiffer}, $\xi=\nabla u$ solves \eqref{eq:xi_overdet_EP-1}.
Then by \eqref{eq:SH}, for all $x\in\dOS$, 
\[
B(x)=\Delta\partial_{n}u(x)-\partial_{n}^{3}u(x)=-\frac{q}{\lambda}H(x),
\]
where $H(x)$ is the mean curvature of $\dOS$ at $x$. Moreover,
by approximating from the inside with $(x^{j})\subset\OS$, using
the PDE then the Neumann boundary condition in the limit, 
\[
\Delta\partial_{n}u(x)=\lim_{j\to\infty}-\mu_{S}\partial_{n}u(x^{j})=0.
\]
This shows that if $\partial_{n}^{3}u(x)=C_{S}$ , then $H(x)=\frac{\lambda}{q}C_{S}=-\frac{C_{S}}{\mu_{S}c_{S}}$
is constant. By Alexandrov's Theorem,\cite{Aleksandrov1962Spheres},
$\OS$ is a ball. 
\end{proof}

\section{\label{sec:nonlinear}Consequences for related non-linear systems}

Theorem \ref{thm:strongstab} gives a complete picture of the long-time
behaviour of the fluid-elastic linear semigroup. The next step is
to apply it in a nonlinear setting. We consider two corresponding
nonlinear systems. The first system still keeps the fluid domain fixed,
but introduces fluid convection. With respect to elastic deformation,
this can be seen as a high-oscillation - small deformation set-up.
In the next subsection, it is shown that, maybe not surprisingly the
long-term behaviour can be identified completely from initial data
also in this case. We note that we move to a stronger notion of solutions
then obtained from the semigroup, in order to be able to handle the
nonlinearity in the proof of existence of solutions \cite{DisserLuckas2025}.
The second system considered in Subsection \ref{subsec:fullynonlinear}
fully incorporates the interaction in the change of fluid domain according
to the displacement of the structure. In this case, pressure waves
are also relevant, but the situation is less clear.

\subsection{\label{subsec:resultsluckas}Identification of the limit pressure
wave for the high oscillation - small deformation system}

Consider the system
\begin{eqnarray}
\begin{cases}
\begin{array}{rcll}
\dot{u}+(u\cdot\nabla)u-\Div(S(u,p)) & = & 0 & \textrm{in }(0,T)\times\OF,\\
\Div(u) & = & 0 & \textrm{in }(0,T)\times\OF,\\
S(u,p)n & = & L(\xi)n & \textrm{on }(0,T)\times\dOS,\\
u & = & \dot{\xi} & \textrm{on }(0,T)\times\dOS,\\
u & = & 0 & \text{on }(0,T)\times\dO,\\
\ddot{\xi}-\textrm{div}(L(\xi)) & = & 0 & \textrm{in }(0,T)\times\OS,\\
u(0) & = & u_{0} & \textrm{in }\OF,\\
\xi(0) & = & \xi_{0} & \textrm{in }\OS,\\
\dot{\xi}(0) & = & \xi_{1} & \textrm{in }\OS,
\end{array}\end{cases}\label{eq:nonlin_system}
\end{eqnarray}
which is different from \eqref{eq:linsys} in that it includes fluid
convection and does not include a shift in the Lamé operator. In \cite{DisserLuckas2025},
we proved the existence of global strong solutions for small initial
data and convergence to rest in the fluid velocity, and we characterized
the long-time behaviour of $\xi$. In particular, in the case of a
bad domain $\OS$, we proved convergence to pressure wave solution
$\eta^{*}$ which was constructed, in dependence of the full orbits
$(\xi(t),\dot{\xi}(t))_{t\geq0}$. By adapting the invariance from
Lemma \ref{lem:invariance} to this system, we can now characterize
$\eta^{*}$ explicitly from the initial data, in Corollary \ref{cor:nonlinearPressureWave}.
To fix the details, we recall the existence of strong solutions from
\cite{DisserLuckas2025}.
\begin{thm}
\label{thm:globEx} 
\begin{enumerate}
\item \emph{Local Existence:} For any initial data 
\begin{equation}
\xi_{0}\in\H1^{2}(\OS),\,\xi_{1}\in\H1^{1}(\OS),\,u_{0}\in\H1^{2}(\OF),\label{idata}
\end{equation}
such that there exist 
\begin{align}
u_{1},\,p_{0}\in\H1^{1}(\OF)\text{ and }\xi_{2}\in\Lp^{2}(\OS),\label{idata_2}
\end{align}
satisfying the compatibility conditions
\begin{equation}
\begin{cases}
\begin{array}{rcll}
u_{1}+(u_{0}\cdot\nabla)u_{0}-\Div(S(u_{0},p_{0})) & = & 0 & \text{in }\OF,\\
\Div(u_{0}) & = & 0 & \text{in }\OF,\\
\Div(u_{1}) & = & 0 & \text{in }\OF,\\
S(u_{0},p_{0})n & = & L(\xi_{0})n & \textrm{on }\ensuremath{\partial\Omega_{S}},\\
u_{0} & = & \xi_{1} & \text{on }\dOS,\\
u_{0} & = & 0 & \text{on }\dO,\\
u_{1} & = & 0 & \text{on }\dO,\\
\xi_{2}-\Div(L(\xi_{0})) & = & 0 & \text{in }\OS,
\end{array}\end{cases}\label{eq:compatibility_thm}
\end{equation}
there is a time 
\[
T=T\left(\Vert u_{0}\Vert_{\H1^{1}(\OF)},\Vert u_{1}\Vert_{\Lp^{2}(\OF)},\Vert\varepsilon(\xi_{0})\Vert_{\Lp^{2}(\OS)},\Vert\xi_{1}\Vert_{\H1^{1}(\OS)},\Vert\xi_{2}\Vert_{\Lp^{2}(\OS)}\right)>0
\]
such that a unique strong solution 
\begin{align*}
u\in & \C^{0}([0,T];\H1^{2}(\OF))\cap\H1^{1}(0,T;\H1^{1}(\OF))\cap\C^{1}([0,T];\Lp^{2}(\OF)),\\
p\in & \C^{0}([0,T];\H1^{1}(\OF)),\\
\xi\in & \C^{0}([0,T];\H1^{2}(\OS))\cap\C^{1}([0,T];\H1^{1}(\OS))\cap\C^{2}([0,T];\Lp^{2}(\OS)),
\end{align*}
of (\ref{eq:nonlin_system}) exists. 
\item \emph{\label{enu:GlobEx}Global Existence: }The total of kinetic and
elastic energies for system \eqref{eq:nonlin_system} is defined by
\[
E(t):=\frac{1}{2}\Vert u(t)\Vert_{\Lp^{2}(\OF)}^{2}+\frac{1}{2}\Vert\dot{\xi}(t)\Vert_{\Lp^{2}(\OS)}^{2}+\int_{\OS}L(\xi):\nabla(\xi)(t)\,\dx y,
\]
and a corresponding higher-order quantity is 
\[
F(t):=\frac{1}{2}\Vert\dot{u}(t)\Vert_{\Lp^{2}(\OF)}^{2}+\frac{1}{2}\Vert\ddot{\xi}(t)\Vert_{\Lp^{2}(\OS)}^{2}+\int_{\OS}L(\dot{\xi}):\nabla(\dot{\xi})(t)\,\dx y.
\]
There exist constants $C_{u}>0,\,C_{E}>0,\,C_{K}>0$ such that for
any initial data as in \eqref{idata}, \eqref{idata_2} satisfying
the compatibility conditions (\ref{eq:compatibility_thm}) and the
bounds 
\begin{align}
\Vert u_{0}\Vert_{\H1^{1}(\OF)}\leq C_{u},\hspace{0.5cm}E(0)\leq C_{E},\hspace{0.5cm}F(0)\leq C_{F},\label{inbounds}
\end{align}
the corresponding unique solution $(u,p,\xi)$ to (\ref{eq:nonlin_system})
exists up to any time $T>0$. 
\end{enumerate}
\end{thm}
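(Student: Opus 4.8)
The plan is to view \eqref{eq:nonlin_system} as a perturbation of the linear Stokes--Lamé system by the convection term $(u\cdot\nabla)u$, and to run the standard two-step programme for coupled quasilinear evolution problems: a contraction-mapping argument for a local-in-time strong solution, followed by a~priori energy estimates combined with stationary elliptic regularity and a continuation argument for global existence under smallness. The structural input is a \emph{strong}-solution theory for the linear system in the regularity class asserted in the theorem, i.e. the linear analogue of \cite[Theorem~2]{DisserLuckas2025}; this is where one must exploit the trace structure of the fluid--elastic coupling — the fluid normal stress $S(u,p)n|_{\dOS}\in H^{1/2}(\dOS)$ furnishing Neumann data for the elastic equation, the interface velocity furnishing the transmission constraint $u=\dot\xi$ — because the hyperbolic elastic equation produces no interior regularity gain for $\xi$. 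The compatibility conditions \eqref{eq:compatibility_thm} are precisely what makes the data admissible for this linear theory: they pin down $\dot u(0)=u_1$, $\ddot\xi(0)=\xi_2$, $p(0)=p_0$ and force the transmission relations to hold at $t=0$, so that $u\in\C^1([0,T];L^2(\OF))$, $\xi\in\C^2([0,T];L^2(\OS))$ and $p\in\C^0([0,T];H^1(\OF))$ can be attained without a singular initial layer.

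\emph{Local existence.} First I would freeze the convective coefficient: for $\tilde u$ in a suitable ball of the solution space over $[0,T]$, let $(u,p,\xi)=\Phi(\tilde u)$ solve the linear system with source $-(\tilde u\cdot\nabla)\tilde u$ in the fluid momentum balance and the prescribed initial and transmission data. Since $H^2(\OF)\hookrightarrow\C^0(\overline{\OF})$ in three dimensions, $\|(\tilde u\cdot\nabla)\tilde u\|_{\C^0([0,T];L^2(\OF))}\lesssim\|\tilde u\|_{\C^0([0,T];H^2(\OF))}^2$, so the linear estimate keeps $\Phi(\tilde u)$ in the same class with controlled norm; a factor $T^\alpha$ with $\alpha>0$, produced by integrating the source (respectively the difference of two sources) in time, makes $\Phi$ a self-map and a contraction on the ball once $T$ is small, the admissible $T$ depending only on the data norms listed in the statement. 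The fixed point is the unique local strong solution, and uniqueness in the full class follows from the same difference estimate.

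\emph{Global existence.} The next step is the energy identity obtained by testing the fluid momentum equation with $u$ and the elastic equation with $\dot\xi$ and adding: the interface contributions produced by $S(u,p)n=L(\xi)n$ and $u=\dot\xi$ cancel, incompressibility and $u|_{\partial\Omega}=0$ remove the pressure term, and one is left with
\[
\frac{d}{dt}E(t)+2\nu\,\|D(u(t))\|_{L^2(\OF)}^2=-\frac12\int_{\dOS}|\dot\xi|^2\,(\dot\xi\cdot n),
\]
the right-hand side being a sign-indefinite \emph{cubic} interface term generated by the convective derivative. Differentiating \eqref{eq:nonlin_system} once in time and testing with $(\dot u,\ddot\xi)$ gives the analogous identity for $F(t)$, the linear interface terms cancelling once more and leaving the volume contributions $\int_{\OF}\bigl[(\dot u\cdot\nabla)u+(u\cdot\nabla)\dot u\bigr]\cdot\dot u$ together with a differentiated version of the boundary term; all nonlinear terms are absorbed into $E$, $F$ and the dissipation $\|D(u)\|_{L^2}^2$, $\|D(\dot u)\|_{L^2}^2$ by Sobolev embeddings, trace inequalities and Young's inequality, yielding a differential inequality of bootstrap type, schematically $\frac{d}{dt}(E+F)+c\,(\mathrm{dissipation})\le C\,(E+F)^{3/2}$. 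Coupling this with stationary estimates — a Stokes estimate bounding $\|u\|_{H^2(\OF)}+\|p\|_{H^1(\OF)}$ in terms of $\dot u+(u\cdot\nabla)u$, the interface velocity and the interface stress, and a Lamé estimate bounding $\|\xi\|_{H^2(\OS)}$, modulo the finite-dimensional rigid-motion kernel that is controlled through the coupling to the incompressible fluid and the conserved $K_\xi$, in terms of $\ddot\xi$ and $L(\xi)n|_{\dOS}=S(u,p)n|_{\dOS}$ — one controls the full strong norms by $E+F$ as long as this quantity stays small; a continuity argument then shows that under the smallness \eqref{inbounds} it does stay small, the local existence time does not shrink along the iteration, and the solution is global.

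\emph{Main obstacle.} The genuinely delicate point is the parabolic--hyperbolic coupling: since $\xi$ gains no regularity from its own equation, the two stationary elliptic estimates above are coupled through the interface stress, $\|u\|_{H^2(\OF)}+\|p\|_{H^1(\OF)}\leftrightarrow\|\xi\|_{H^2(\OS)}$, and this loop can only be closed by using the sharp trace regularity of the Stokes normal stress on $\dOS$ together with smallness — exactly the place where the dedicated linear strong-solution theory is indispensable and where a mere $C_0$-semigroup on the energy space does not suffice. The secondary difficulties are the sign-indefinite cubic interface terms produced by convection in both energy identities, which must be absorbed via trace and interpolation inequalities under smallness, and the need to carry the pressure through every estimate, since (as emphasised in Section~\ref{sec:ATsemigroup}) it is fully determined by the fluid--elastic dynamics rather than only up to a constant.
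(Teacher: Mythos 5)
This theorem is stated in the paper as a \emph{recollection} of Theorem~2 and the global result of \cite{DisserLuckas2025}; the paper itself supplies no proof. So there is no in-paper argument to compare against line by line, and your proposal must be judged on its own terms as a reconstruction of what the cited reference does.

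On those terms your sketch follows the correct and, in fact, the only standard route: a maximal-regularity-type strong solution theory for the linear Stokes--Lam\'e transmission problem in the asserted class; a fixed-point iteration in which the convective term, controlled via $H^2(\OF)\hookrightarrow C^0(\overline{\OF})$, is treated as a source that produces a time factor making the map a contraction on a small ball; and, for global existence, the basic energy identity from testing with $(u,\dot\xi)$, its once-differentiated analogue tested with $(\dot u,\ddot\xi)$, both closed under smallness with the aid of stationary Stokes and Lam\'e elliptic regularity and a continuation argument. You also put your finger on the two genuine pressure points. First, the cubic interface term
\[
-\tfrac12\int_{\dOS}|\dot\xi|^2\,(\dot\xi\cdot n),
\]
which does appear because $u|_{\dOS}=\dot\xi$ is a full vectorial Dirichlet condition, so $\int_{\OF}(u\cdot\nabla)u\cdot u=\tfrac12\int_{\partial\OF}|u|^2(u\cdot n)$ does not vanish; this term is sign-indefinite and must be absorbed by trace and interpolation estimates under the smallness hypothesis. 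Second, the lack of interior regularity gain on the hyperbolic side: $\xi$ is only estimated by elliptic Lam\'e regularity using $\ddot\xi$ and the interface stress $L(\xi)n=S(u,p)n$, which simultaneously feeds back into the Stokes estimate for $u$ and $p$; the loop closes only because of the trace regularity of the Stokes normal stress and the smallness of the data.

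Two smaller remarks. Your energy identity $\frac{d}{dt}E + 2\nu\|D(u)\|_{L^2(\OF)}^2 = -\tfrac12\int_{\dOS}|\dot\xi|^2(\dot\xi\cdot n)$ is only consistent with the elastic-energy normalisation $\tfrac12\int_{\OS}L(\xi):\nabla\xi$; with the paper's stated $E$, which carries the full $\int_{\OS}L(\xi):\nabla\xi$, the time derivative produces $2\int_{\OS}L(\xi):\nabla\dot\xi$ and the identity as written does not balance. This is bookkeeping, but worth fixing before you rely on the Gronwall step. And your proposal remains a sketch: the linear strong-solution theorem that underwrites the contraction map, the precise trace and absorption estimates closing the $E$--$F$ inequality, and the handling of the rigid-motion kernel in the Lam\'e elliptic estimate (note that \eqref{eq:nonlin_system} has \emph{no} zeroth-order shift, unlike \eqref{eq:linsys}, so the kernel issue is real here) are all invoked but not supplied. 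None of this is a wrong turn — it is simply where the work of \cite{DisserLuckas2025} actually lies.
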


The long-time-behaviour of solutions that we found in \cite{DisserLuckas2025}
is the following:
\begin{thm}
\label{thm:longtimenonlinear} Define $q(t):=\frac{1}{\vert\OF\vert}\int_{\OF}p(t,y).$Then
the global solution $(u,p,\xi)$ to (\ref{eq:nonlin_system}) given
in Theorem \ref{thm:globEx}\eqref{enu:GlobEx} satisfies 
\begin{align*}
\lim_{t\to\infty}\Vert u(t)\Vert_{\H1^{1}(\OF)}\qquad & =0,\\
\lim_{t\to\infty}\Vert(p-q)(\cdot+t)\Vert_{\Lp^{2}((0,T/2)\times\OF)} & =0,\\
\lim_{t\to\infty}\Vert L(\xi(\cdot+t)n-q(\cdot+t)n\Vert_{\Lp^{2}(\H1^{-1/2}(\dOS))} & =0,
\end{align*}
and 

\begin{align}
\lim_{t\to\infty}\Vert\xi(t)-\eta^{*}(t)-r(t)-\kappa_{0}\mathrm{Id}_{\mathbb{R}^{3}}\Vert_{\H1^{1}(\OS)}=0,\label{eq:xi_conv_baddomain-1}
\end{align}
where $\kappa_{0}=\frac{K_{\xi_{0}}}{3|\OS|}$, $\xi(t)-\eta^{*}(t)-r(t)\in\bar{H}^{1}(\OS)$
for all $t\geq0$, 
\begin{align*}
r(t)\in R=\text{ker}(D)=\{r:\mathbb{R}^{3}\to\mathbb{R}^{3}:r(y)=My+b,\,M=-M^{T}\in\mathbb{R}^{3\times3},\,b\in\mathbb{R}^{3}\},
\end{align*}
is the projection of $\xi(t)-\eta^{*}(t)$ onto the set $R$ of rigid
motions, and either $\eta^{*}=0$ or $\eta^{*}$ is a pressure wave.
\end{thm}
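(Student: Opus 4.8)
The plan is to treat \eqref{eq:nonlin_system} as the linear system \eqref{eq:linsys} (without the Lam\'e shift) perturbed by an asymptotically vanishing convection term, and to run a LaSalle-type argument on the trajectory furnished by Theorem~\ref{thm:globEx}\eqref{enu:GlobEx}: the viscous dissipation pins the $\omega$-limit set, and the linear structure from Sections~\ref{sec:ATsemigroup}--\ref{sec:Invariance} identifies it. Throughout I would use that the higher-order bounds in Theorem~\ref{thm:globEx} keep the orbit $\{(\xi(t),\dot\xi(t),u(t)):t\ge0\}$ bounded in $H^{2}(\OS)\times H^{1}(\OS)\times H^{2}(\OF)$, hence precompact in the energy space by compact Sobolev embeddings (together with an Aubin--Lions argument in the time variable).

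\textbf{Fluid velocity and pressure.} First I would test the momentum equation with $u$ and the elastic equation with $\dot\xi$; the interface contributions $\int_{\dOS}(L(\xi)n)\cdot\dot\xi$ cancel because $S(u,p)n=L(\xi)n$ on $\dOS$, while $u=0$ on $\dO$ and the convection term leaves the cubic interface integral $\tfrac12\int_{\dOS}|u|^{2}(u\cdot n)\,\dx\sigma$, which is bounded by $C\Vert u\Vert_{L^{\infty}(\OF)}\Vert D(u)\Vert_{L^{2}(\OF)}^{2}$ and hence, for small data, absorbed by the viscous term. This gives that $E$ is non-increasing, that it has a limit, and that $\int_{0}^{\infty}\Vert D(u(t))\Vert_{L^{2}(\OF)}^{2}\,\dx t<\infty$; since $t\mapsto\Vert D(u(t))\Vert_{L^{2}(\OF)}^{2}$ is uniformly continuous by the uniform $H^{2}$-bound, it tends to $0$, and Korn's inequality on $\OF$ (with $u=0$ on $\dO$) yields $\Vert u(t)\Vert_{H^{1}(\OF)}\to0$. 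For the pressure, taking the divergence of the momentum equation gives $\Delta p=-\partial_{i}u_{j}\partial_{j}u_{i}$ in $\OF$ with boundary data inherited from $\dO$ and $\dOS$ as in the harmonic splitting of Section~\ref{sec:ATsemigroup}; since the right-hand side is quadratic in $\nabla u$ and $\nabla u\to0$ in the time-integrated sense, elliptic estimates for $p-q$, $q(t)=\tfrac1{|\OF|}\int_{\OF}p(t,\cdot)$, yield the stated averaged convergence of $p-q$ and, via $L(\xi)n=S(u,p)n$ on $\dOS$, of $L(\xi)n-qn$.

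\textbf{Exact invariance of the pressure-wave component.} The decisive structural point, which is where the present analysis sharpens \cite{DisserLuckas2025}, is to repeat the computation of Lemma~\ref{lem:invariance} directly for the strong solution of \eqref{eq:nonlin_system}. Testing the elastic equation with a Dirichlet--Lam\'e eigenfunction $\tilde\psi_{k}$, $k\in K$: the interface term $\int_{\dOS}(L(\xi)n)\cdot\tilde\psi_{k}$ drops because $\tilde\psi_{k}|_{\dOS}=0$, and $\int_{\dOS}\xi\cdot n=K_{\xi}$ is still invariant since $\dot K_{\xi}=\int_{\dOS}u\cdot n=\int_{\OF}\Div u+\int_{\dO}u\cdot n=0$ holds for \eqref{eq:nonlin_system} as well, so the coefficients $c_{k}(t)=\int_{\OS}\xi(t)\cdot\tilde\psi_{k}$ satisfy a closed, autonomous, undamped oscillator at frequency $\sqrt{\mu_{k}}$ --- exactly as the linear computation \eqref{eq:odeXik}--\eqref{eq:odezetak}, but without the shift. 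Hence the $\Ws$-part of $(\xi(t),\dot\xi(t))$ is \emph{exactly} the pressure wave $\eta^{*}$ emanating from $\mathcal{P}_{\Ws}(\xi_{0},\xi_{1})$, which is the explicit identification recorded in Corollary~\ref{cor:nonlinearPressureWave}; in particular $\eta^{*}=0$ precisely when this projection vanishes, e.g.\ on good domains.

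\textbf{Decay of the remainder and the main obstacle.} It remains to show $\xi(t)-\kappa_{0}\mathrm{Id}-r(t)-\eta^{*}(t)\to0$ in $H^{1}(\OS)$, with $\kappa_{0}=K_{\xi_{0}}/(3|\OS|)$ chosen so that $\xi-\kappa_{0}\mathrm{Id}\in\bar{H}^{1}(\OS)$ (as in Lemma~\ref{lem:Kxi}, using $K_{\mathrm{Id}}=3|\OS|$) and $r(t)$ the projection of $\xi(t)-\eta^{*}(t)$ onto the rigid motions $R=\ker(D)$, which now lie in the kernel of the shiftless Lam\'e operator. On the $\omega$-limit set the energy is constant, so the identity of the first step forces $D(u)\equiv0$, hence $u\equiv0$ there, and the limiting elastic dynamics is a fixed-interface Lam\'e trajectory with stress $qn$ on $\dOS$; after removing $\kappa_{0}\mathrm{Id}$, the $R$-part and the $\Ws$-part, what remains lies in $\Es$, where it is a trajectory of the strongly stable semigroup $(\Scs(t)|_{\Es})$ of Theorem~\ref{thm:strongstab}. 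Since a strongly stable semigroup admits no nontrivial compact invariant set, the $\Es$-component of the $\omega$-limit set is $\{0\}$, which is \eqref{eq:xi_conv_baddomain-1}. The hard part is exactly this last step: establishing genuine precompactness of the orbit in the energy space despite the absence of parabolic smoothing for $\xi$, and verifying that on the limit set the non-sign-definite convection term really vanishes --- this is where the smallness assumption of Theorem~\ref{thm:globEx} is essential; a secondary subtlety is that $r(t)$ need not be constant, so one must split off the $R$-part first and apply the linear strong-stability statement only on its complement.
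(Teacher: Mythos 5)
Your proposal takes a genuinely different and much longer route than the paper's proof, and it contains a gap you yourself flag.

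The paper's proof of Theorem~\ref{thm:longtimenonlinear} is essentially a citation: it states that, with respect to Theorem~13 of \cite{DisserLuckas2025}, ``the only change is in the characterization of the time-constant displacement $\kappa_{0}\mathrm{Id}_{\mathbb{R}^{3}}$.'' The full energy/dissipation estimates, the precompactness machinery, and the extraction of the limit pressure wave $\eta^{*}$ by a compactness argument are all \emph{already in} \cite{DisserLuckas2025}; the present proof only (i) replaces the Neumann problem \eqref{eq:xi_stat_system-1} by the shiftless version \eqref{eq:xi_stat_system-1-1}, whose solution is the dilation $\varphi(y)=\varphi_{0}y$, giving the explicit $\kappa_{0}=K_{\xi_{0}}/(3|\OS|)$, and (ii) checks via a short integration by parts that the rigid motions $r(t,y)=My+b$ are $H^{1}$-orthogonal to the pressure-wave modes $\psi_{k}$, $k\in K$ (using $\int_{\dOS}n=0$ and skew-symmetry of $M$), so the $R$-part sits in $\Es$ and the decomposition is well defined. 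You attempt to re-derive the entire statement from scratch, which is a legitimate alternative plan but is not what the paper does.

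Two issues with your reconstruction. First, and most importantly, you acknowledge that ``the hard part is exactly this last step: establishing genuine precompactness of the orbit in the energy space,'' and you do not close it; that precompactness is the substance of \cite{DisserLuckas2025} and is not supplied by the higher-order bounds alone, since $\xi$ is only bounded in $H^{2}$ and the embedding $H^{2}\hookrightarrow H^{1}$ gives compactness of the displacement component but the coupling and the lack of parabolic smoothing make the Aubin--Lions step for the full vector nontrivial. Second, your LaSalle conclusion ``a strongly stable semigroup admits no nontrivial compact invariant set'' is not a correct general fact about strongly (as opposed to exponentially) stable $C_{0}$-semigroups; what one actually needs is a Tauberian-type argument exploiting the spectral information of Theorem~\ref{thm:spectrum} (countable $i\mathbb{R}\cap\sigma$, no imaginary point spectrum) to rule out nontrivial bounded full orbits in $\Es$, and this needs to be stated and justified. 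A third, smaller point: your derivation of the exact $\Ws$-invariance of the nonlinear flow and the resulting identification $\eta^{*}=\eta^{\Ws}$ is the content of Corollary~\ref{cor:nonlinearPressureWave}, which is logically \emph{downstream} of the present theorem; the theorem itself only asserts that $\eta^{*}$ is \emph{some} pressure wave (obtained in \cite{DisserLuckas2025} through compactness), so if you want to use the invariance argument inside the proof you should make explicit that you are proving a stronger statement than the one asserted.
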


\begin{proof}
With respect to Theorem 13 in \cite{DisserLuckas2025}, the only change
is in the characterization of the time-constant displacement $\kappa_{0}\mathrm{Id}_{\mathbb{R}^{3}}$.
Compared to the linear system \eqref{eq:linsys}, in the present system
\eqref{eq:nonlin_system}, there is no shift in the Lamé equations.
Thus, the rigid motions remain in the kernel of the Lamé operator
and must be accounted for (both cases with and without shift are addressed
in the present paper because they may be of independent interest).
This also has the consequence that for all $\xi\in H^{1}(\OS)$, the
projection onto $\bar{H}^{1}$ along the kernel of this operator is
determined via solutions of the Neumann problem 

\begin{align}
\begin{cases}
\begin{array}{rcll}
\Div(L(\varphi)) & = & 0 & \text{ in }\OS,\\
L(\varphi)n & = & \kappa n & \text{ on }\dOS,
\end{array}\end{cases}\label{eq:xi_stat_system-1-1}
\end{align}
with parameter $\kappa\in\mathbb{R}$. Up to an additive constant,
this solution is given explicitly by $\varphi(y)=\varphi_{0}y,\varphi_{0}=\frac{\kappa}{\lambda_{0}+3\lambda_{1}}$.
In particular, using that $\OS$ is bounded, 
\begin{equation}
\xi\in H^{1}(\OS)\Leftrightarrow\xi-\varphi_{0}\mathrm{Id}_{\OS}\in\bar{H}^{1}(\OS),\varphi_{0}=\frac{K_{\xi}}{3|\OS|}.\label{eq:Kxigone-1}
\end{equation}
Note that $\eta^{*}(t)\in\bar{H}^{1}(\OS)$ and we calculate that
$r(t,y)=My+b\in\Es$ due to 
\[
b\cdot\int_{\OS}\psi_{k}=-\frac{b}{\mu_{k}}\cdot\int_{\OS}\mathrm{div}(\sigma(\psi_{k}))=-\frac{bq_{k}}{\mu_{k}}\cdot\int_{\dOS}n=0
\]
and 
\begin{align*}
\int_{\OS}My\cdot\psi_{k}(y) & =-\frac{1}{\mu_{k}}\cdot\int_{\OS}My\cdot\mathrm{div}(\sigma(\psi_{k}))\\
 & =\frac{1}{\mu_{k}}\cdot\int_{\OS}M:\sigma(\psi_{k})-\frac{q_{k}}{\mu_{k}}\cdot\int_{\dOS}My\cdot n=-\frac{q_{k}}{\mu_{k}}\int_{\OS}\mathrm{div}(My)=0
\end{align*}
for all $k\in K$, due to the skew-symmetry of $M$.
\end{proof}
In \cite{DisserLuckas2025}, $\eta^{*}$ was given only implicitly,
through a compactness argument. Now it can be determined from the
initial data. 
\begin{cor}
\label{cor:nonlinearPressureWave} In Theorem \ref{thm:longtimenonlinear},
$\eta^{*}=\eta^{\Ws}$, where $\eta^{\Ws}(t)=\eta\left(t,\mathcal{P}_{\Ws}\mathbf{x}_{0}\right)$
is the pressure wave eminating from the projection of the initial
data onto $\Ws$. 
\end{cor}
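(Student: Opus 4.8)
The plan is to mirror the proof of the invariance Lemma~\ref{lem:invariance}, but now in the setting of the strong solutions of the semilinear system \eqref{eq:nonlin_system} provided by Theorem~\ref{thm:globEx}. The key observation is that in the coefficient ODEs \eqref{eq:odeXik}--\eqref{eq:odezetak} the only inputs from the equations are the Lam\'e part and the transmission condition $L(\xi)n=qn$ on $\dOS$; the fluid convection term $(u\cdot\nabla)u$ and the change of the pressure normalization do not enter, because in those computations we pair the $\xi$- and $\dot\xi$-equations with the Dirichlet-Lam\'e eigenfunctions $\psi_k,\tilde\psi_k$, $k\in K$, which vanish on $\dOS$, so the boundary terms are governed entirely by \eqref{eq:xi_overdet_EP-1}. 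The absence of the shift in the Lam\'e operator in \eqref{eq:nonlin_system} changes the eigenvalue from $1+\mu_k$ to $\mu_k$, i.e.\ the relevant eigenfunctions now solve $-\Div L(\psi_k)=\mu_k\psi_k$; one must check that the set $K$ of ``bad'' indices is the same object as in the linear case (it is, since \eqref{eq:xi_overdet_EP-1} is the defining system of bad domains regardless of the shift), and that the projections $\mathcal P_W,\mathcal P_{\tilde W},\mathcal P_{\Ws}$ make sense verbatim with these $\psi_k$.

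First I would record, using the strong regularity from Theorem~\ref{thm:globEx}, that for a global solution $(u,p,\xi)$ the coefficients $\xi_k^H(t):=(\xi(t),\psi_k)_{H^1(\OS)}$ and $\zeta_k^L(t):=(\dot\xi(t),\tilde\psi_k)_{L^2(\OS)}$ are well-defined $C^1$, resp.\ $C^0$, functions of $t$, and differentiate (or integrate) the $\xi$- and $\dot\xi$-equations tested against $\psi_k,\tilde\psi_k$. Exactly as in \eqref{eq:odeXik}--\eqref{eq:odezetak}, the Gauss theorem together with \eqref{eq:xi_overdet_EP-1} (with $1+\mu_k$ replaced by $\mu_k$) yields the decoupled system $\dot\xi_k^H=\sqrt{\mu_k}\,\zeta_k^L$, $\dot\zeta_k^L=-\sqrt{\mu_k}\,\xi_k^H$, whose solution is the rotation $e^{t\sqrt{\mu_k}J}(\xi_{0,k}^H,\xi_{1,k}^L)$ with $J=\left(\begin{smallmatrix}0&1\\-1&0\end{smallmatrix}\right)$. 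These are precisely the coefficients of the pressure wave $\eta(t,\mathcal P_{\Ws}\mathbf x_0)$ eminating from $\mathcal P_{\Ws}\mathbf x_0=(\eta_0,\eta_1,0)$ with $\eta_0=\sum_{k\in K}\xi_{0,k}^H\psi_k$, $\eta_1=\sum_{k\in K}\xi_{1,k}^L\tilde\psi_k$. Hence $\mathcal P_{\Ws}(\xi(t),\dot\xi(t),u(t))=\eta^{\Ws}(t)$ for every $t\ge 0$, which is the invariance statement adapted to \eqref{eq:nonlin_system}.

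Next I would use Theorem~\ref{thm:longtimenonlinear}: it gives $\xi(t)-\eta^*(t)-r(t)-\kappa_0\mathrm{Id}_{\mathbb R^3}\to 0$ in $H^1(\OS)$, where $\eta^*$ is either $0$ or a pressure wave, $r(t)\in R$ the rigid-motion part, and (by the computation already carried out in the proof of Theorem~\ref{thm:longtimenonlinear}) both $r(t)$ and $\kappa_0\mathrm{Id}_{\mathbb R^3}$ project to $0$ under $\mathcal P_{\Ws}$, because $\psi_k$ is $L^2$-orthogonal to all rigid motions for $k\in K$. Applying the bounded projection $\mathcal P_{\Ws}$ to the convergence \eqref{eq:xi_conv_baddomain-1} therefore gives $\mathcal P_{\Ws}\bigl(\xi(t)\bigr)-\mathcal P_{\Ws}\bigl(\eta^*(t)\bigr)\to 0$ in $H^1(\OS)$. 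On the left, $\mathcal P_{\Ws}(\xi(t))=\eta^{\Ws}(t)$ by the invariance just proved; on the right, $\eta^*(t)\in\Ws$ for all $t$ (a pressure wave has all its $H^1$-mass in $\overline{\mathrm{span}\{\psi_k:k\in K\}}$), so $\mathcal P_{\Ws}(\eta^*(t))=\eta^*(t)$. Thus $\eta^{\Ws}(t)-\eta^*(t)\to 0$ in $H^1(\OS)$. Since both $\eta^{\Ws}$ and $\eta^*$ are pressure waves, i.e.\ almost-periodic functions of the form $\sum_{k\in K}(a_k\sin\sqrt{\mu_k}t+b_k\cos\sqrt{\mu_k}t)\psi_k$, their difference is again such a function and can only tend to $0$ if it is identically $0$ (compare Fourier coefficients over the frequencies $\sqrt{\mu_k}$, using that the $\psi_k$ are orthogonal in $H^1$); hence $\eta^*=\eta^{\Ws}$, as claimed.

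The main obstacle I anticipate is the bookkeeping around which eigenvalue/eigenfunction system is in force: \eqref{eq:nonlin_system} has no $+\xi$ shift, so the ``candidate limit'' system and the relevant overdetermined eigenvalue problem are $-\Div L(\psi)=\mu\psi$, $\psi|_{\dOS}=0$, $L(\psi)n=qn$, with the frequencies $\sqrt{\mu_k}$ rather than $\sqrt{1+\mu_k}$, and one has to make sure this is exactly the notion of pressure wave used in \cite{DisserLuckas2025} and in Theorem~\ref{thm:longtimenonlinear}, so that $\Ws$, $\mathcal P_{\Ws}$ and ``$\eta^*$ is a pressure wave'' are all referring to the same spaces. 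Once the definitions are aligned, verifying that the convection term genuinely drops out of the tested $\xi$-equations (it never appears there — it lives only in the fluid equation) and that the rigid-motion and $\kappa_0$-parts are $\mathcal P_{\Ws}$-invisible (already shown inside the proof of Theorem~\ref{thm:longtimenonlinear}) is routine. A minor additional point is to justify, via the regularity class in Theorem~\ref{thm:globEx}, that the integrations by parts against $\psi_k,\tilde\psi_k$ are licit; this is immediate from $\xi\in C^1([0,T];H^1(\OS))\cap C^2([0,T];L^2(\OS))$ together with $L(\xi)n\in H^{-1/2}(\dOS)$.
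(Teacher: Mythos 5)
Your proof is correct and follows essentially the same route as the paper: mirror Lemma \ref{lem:invariance} for the semilinear system (noting that fluid convection only touches the fluid equation and the pressure terms drop against the Dirichlet eigenfunctions), apply the bounded projection $\mathcal{P}_{\Ws}$ to the convergence \eqref{eq:xi_conv_baddomain-1} with the rigid-motion and $\kappa_0$-parts projecting to zero, and conclude via the almost-periodic structure of pressure waves (the paper phrases this last step as a coefficient comparison along explicit sequences, which is the same idea). You also correctly flag the frequency bookkeeping $\sqrt{1+\mu_k}\rightsquigarrow\sqrt{\mu_k}$ caused by the absent $+\xi$ shift in \eqref{eq:nonlin_system}.
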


\begin{proof}
The key observation is that also for the semiflow associated to system
\eqref{eq:nonlin_system}, Lemma \ref{lem:invariance} holds. We express
this in the following form: For all $t\geq0$, denote by $\bar{\xi}(t)=\Xi(t,\mathbf{x}_{0})$
the displacement component of the strong solution of \eqref{eq:nonlin_system}
corresponding to initial data $\mathbf{x}_{0}=(\xi_{0}-\kappa_{0}\mathrm{Id}_{\mathbb{R}^{3}},\xi_{1},u_{0})$.
Then 
\begin{equation}
\mathcal{P}_{E}\Xi(t,\mathbf{x}_{0})=\Xi(t,\mathcal{P}_{\Es}\mathbf{x}_{0}),\qquad\mathcal{P}_{W}\Xi(t,\mathbf{x}_{0})=\Xi(t,\mathcal{P}_{\Ws}\mathbf{x}_{0}),\label{eq:invNL}
\end{equation}
follow exactly as in the proof of Lemma \ref{lem:invariance}, where
regarding the fluid component $u(t)$, only $\mathrm{div}u(t)=0$
in $\OF$, $\dot{\xi}(t)=u(t)$ on $\dOS$ and $u(t)=0$ on $\partial\Omega$
was used (regarding the next subsection, it is important to note that
it was also used that the fluid domain does not change). Now due to
\eqref{eq:xi_conv_baddomain-1} and \eqref{eq:invNL},
\begin{align*}
\mathcal{P}_{E}\left(\Xi(t)\right) & \overset{t\to\infty}{\to}\mathcal{P}_{E}\left(\eta^{*}(t)+r(t)\right)=r(t),\text{ and }\\
\eta^{\Ws}(t)=\Xi(t,\mathcal{P}_{\Ws}\mathbf{x}_{0})=\mathcal{P}_{W}\left(\Xi(t)\right) & \overset{t\to\infty}{\to}\mathcal{P}_{W}\left(\eta^{*}(t)+r(t)\right)=\eta^{*}(t)\in\Ws.
\end{align*}
By choosing e.g.~the sequences $t_{n}^{k}:=\sqrt{1+\mu_{k}}\frac{\pi}{2}n$
and by comparing the coefficients $\eta_{k}^{\Ws}(t_{n}^{k})$ and
$\eta_{k}^{*}(t_{n}^{k})$, we see that due to characterization \eqref{eq:charomega},
the convergence $\eta^{\Ws}(t)\overset{t\to\infty}{\to}\eta^{*}(t)$
implies the equality of both functions.
\end{proof}
\begin{rem}
The conditions for global existence of solutions in Theorem \ref{thm:longtimenonlinear}
can now be slightly relaxed: for any compatible initial data $(\xi_{0},\xi_{1},u_{0})$,
it is sufficient that, after subtracting $\kappa_{0}\mathrm{Id}_{\mathbb{R}_{3}}$
from $\xi_{0}$, their projection onto $\Es$ satisfies the smallness
conditions \eqref{inbounds}. Regardless of its size, the projected
initial data $\mathcal{P}_{\Ws}(\xi_{0}-\kappa_{0}\mathrm{Id}_{\mathbb{R}^{3}},\xi_{1},u_{0})$
will only create a pressure wave solution with constant energy and
no fluid component. It exists globally and can be subtracted or added. 
\end{rem}

\subsection{\label{subsec:fullynonlinear}Consequences for the fully non-linear
system}

Finally, we briefly consider the fully coupled system

\begin{eqnarray}
\begin{cases}
\begin{array}{rcll}
\dot{u}+(u\cdot\nabla)u-\Div(S(u,p)) & = & 0 & \textrm{in }(0,T)\times\OF(0,T),\\
\Div(u) & = & 0 & \textrm{in }(0,T)\times\OF(0,T),\\
(S(u,p)\circ X)\mathrm{Cof}(\nabla X)n & = & L(\xi)n & \textrm{on }(0,T)\times\dOS,\\
u\circ X & = & \dot{\xi} & \textrm{on }(0,T)\times\dOS,\\
u & = & 0 & \text{on }(0,T)\times\dO,\\
\ddot{\xi}-\textrm{div}(L(\xi)) & = & 0 & \textrm{in }(0,T)\times\OS,\\
\dot{X} & = & u\circ X & \text{in }(0,T)\times\OF,
\end{array}\end{cases}\label{eq:fullsystem-1}
\end{eqnarray}
with initial conditions 
\begin{eqnarray*}
u(0)=u_{0}\quad\text{ and} & X(0)=\mathrm{Id}, & \text{in }\OF,\\
\xi(0)=\xi_{0}\quad\text{ and} & \dot{\xi}(0)=\xi_{1}, & \text{in }\OS.
\end{eqnarray*}
Here, the fluid domain is time-dependent $\OF(t)=\Omega\setminus\overline{\OS(t)}$,
adapting to the elastic displacement with $X(t,y)=y+\xi(t,y)$ on
$\dOS$. The existence of local-in-time strong solutions to this system
was shown in \cite{BGT2019,RayVan14FSI}. Without additional damping
on the structure, the existence of global solutions, even for small
data, is an open problem. With additional damping terms, there are
global existence results \cite{IKLT2017,KO2023}, also in the case
of nonlinear elasticity \cite{BKS2024}.

It is straightforward to check that pressure waves $\eta$ with $u(t,y)=0,p(t,y)=p(t)$
provide solutions to system \eqref{eq:fullsystem-1}. This is due
to the fact that if $u\equiv0$, the fluid domain $\OF$ remains fixed
and thus system \eqref{eq:fullsystem-1} reduces to \eqref{eq:linsys}.
Consequently, the space $\Ws$ is invariant also under this nonlinear
semiflow. This must be accounted for in a global analysis. However,
the calculations in \eqref{eq:odeXik}, \eqref{eq:odezetak} no longer
work and an analogue of the decomposition in Lemma \ref{lem:invariance}
needs further investigations. 

\bibliographystyle{alpha}
\bibliography{literaturFSI}

\end{document}